\def\C{\mathcal{C}}
\def\X{\mathcal{X}}
\def\Y{\mathcal{Y}}
\def\H{\mathcal{H}}
\def\N{{\mathbb N}}
\def\ld{\varinjlim}
\def\End{\operatorname{End}}
\def\Hom{\operatorname{Hom}}
\def\Ext{\operatorname{Ext}}
\def\pdim{\operatorname{proj\,dim}}
\def\t{\operatorname{t}}
\def\dualita#1#2{\mathrel{
                 \mathop{\vcenter{
                 \offinterlineskip
                 \hbox to 0.6truecm{\rightarrowfill}%\kern ...
                 \hbox to 0.6truecm{\leftarrowfill}}}%
                 \limits_{#2}^{#1}}}
\DeclareMathOperator{\Prod}{Prod}
\DeclareMathOperator{\Ker}{Ker}
\newtheorem{theorem}{Theorem}[section]
\newtheorem{corollary}[theorem]{Corollary}
\newtheorem{definition}[theorem]{Definition}
\newtheorem{lemma}[theorem]{Lemma}
\newtheorem{RRcond}[theorem]{Reiten-Ringel Condition}
\newtheorem{problem}[theorem]{Problem}
\newtheorem{proposition}[theorem]{Proposition}
\theoremstyle{remark}
\newtheorem{remark}[theorem]{Remark}
\newcommand*{\rMod}{\textrm{\textup{Mod-}}}
\newcommand*{\rmod}{\textrm{\textup{mod-}}}
\newcommand{\FP}[1]{\textrm{\textup{FP}}_{#1}}
\begin{document}

\title{Cotorsion pairs, torsion pairs, and $\Sigma$-pure injective cotilting modules}
\author{Riccardo Colpi, Francesca Mantese, Alberto Tonolo}
\address[R. Colpi]{ Dip. Matematica Pura ed Applicata, Universit\`a degli studi di Padova, via Trieste 63, I-35121 Padova Italy}
\email{colpi@math.unipd.it}
\address[F. Mantese]{Dipartimento di Informatica, Universit\`a degli Studi di Verona, strada Le Grazie  15, I-37134 Verona - Italy}
\email{francesca.mantese@univr.it}
\address[A. Tonolo]{ Dip. Matematica Pura ed Applicata, Universit\`a degli studi di Padova, via Trieste 63, I-35121 Padova Italy}
\email{tonolo@math.unipd.it}
%\thanks{\emph{to the memory of Silvia Lucido}}
\thanks{Research supported by grant CPDA071244/07 of Padova University}
\dedicatory{to the memory of our friend and colleague Silvia Lucido}

%\date{\today}
\maketitle

\begin{abstract}

In this paper we study cotorsion and torsion pairs induced by cotilting modules. We prove the existence of a strong relationship between the $\Sigma$-pure injectivity of the cotilting module and the property of the induced cotorsion pair to be of finite type. In particular for cotilting modules of injective dimension at most 1, or for noetherian rings, the two notions are equivalent.
On the other hand we prove that a torsion pair  is cogenerated by a $\Sigma$-pure injective cotilting module if and only if its heart is a locally noetherian Grothendieck category.
Moreover we prove that any ring admitting a $\Sigma$-pure injective cotilting module of injective dimension at most 1 is necessarily coherent. Finally, for noetherian rings, we characterize cotilting torsion pairs induced by $\Sigma$-pure injective cotilting modules.
%

%
%Let $R$ be a ring and $\mathcal A$ be a class of right $R$-modules. Given a cotorsion pair $\mathfrak A:=(\mathcal A, \mathcal A^{\perp})$ generated by $\mathcal A$, we denote by $\mathfrak A_\ell$ the cotorsion pair generated by the $\ell$-presented modules in $\mathcal A$, i.e. the modules $M\in \mathcal A$ which admit a resolution
%\[P_\ell\to ...\to P_1\to P_0\to M\to 0\]
%with $P_i$, $0\leq i\leq \ell$ finitely generated projective modules. We compare the cotorsion theories $\mathfrak A_\ell$, looking for conditions which guarantee the equalities $\mathfrak A_\ell=\mathfrak A_{\ell+i}$, $i\in \mathbb N$, and/or $\mathfrak A_\ell=\mathfrak A$. Particular emphasis is dedicated to cotorsion pairs, and to torsion pairs, induced by cotilting modules over noetherian rings.
\end{abstract}

\section*{Introduction}
The class of modules over an arbitrary associative ring $R$ is too complex to admit any satisfactory classification. For this reason, usually one restricts to study particular, possibly large and representative, classes of modules.

In the recent literature, the theory of modules widely uses the notions of torsion and cotorsion pairs.
Torsion and cotorsion pairs are couples $(\mathcal L,\mathcal M)$ of classes of modules which are maximal with respect to the orthogonality conditions $\Hom(\mathcal L,\mathcal M)=0$ and $\Ext(\mathcal L,\mathcal M)=0$, respectively. These pairs are partially ordered by inclusion of their first components, forming complete lattices. The study of their properties allows an approximation of the whole category of modules.

In this paper we concentrate on torsion and cotorsion pairs \emph{induced} by a cotilting $R$-module, in the sequel briefly called cotilting torsion  and cotorsion pairs. In particular we study finiteness properties of cotilting torsion and cotorsion pairs. Any cotilting module is pure injective; following a suggestion of Enrico Gregorio, we will focus on torsion and cotorsion pairs induced by $\Sigma$-pure injective cotilting modules.

In the first section we compare an arbitrary cotorsion pair $(\mathcal L,\mathcal M)$ with the cotorsion pairs generated by the $\ell$-presented modules in $\mathcal L$.

In the second section we analyze cotilting cotorsion pairs,  obtaining new characterizations of those of \emph{finite type} (see Theorem~\ref{prop:Y1}). In particular, in the noetherian case, these are exactly those induced by a $\Sigma$-pure injective cotilting module (see Corollary~\ref{cor:2.5}). 

Given a torsion pair $(\mathcal X,\mathcal Y)$ in the category of right $R$-modules, 
the \emph{heart} of the torsion pair $(\mathcal X,\mathcal Y)$ is an abelian subcategory $\mathcal H(\mathcal X,\mathcal Y)$ of the derived category of right $R$-modules (see section~\ref{cuore} for more details).
Recently, in \cite{CG} it has been proved that  $\mathcal H(\mathcal X,\mathcal Y)$ is a Grothendieck category if and only if $(\mathcal X,\mathcal Y)$ is a cotilting torsion pair. In the third section we prove (see Theorem~\ref{thm:heart}) that $\mathcal H(\mathcal X,\mathcal Y)$ is a \emph{locally noetherian} Grothendieck category if and only if $(\mathcal X,\mathcal Y)$ is cogenerated by a $\Sigma$-pure injective cotilting module. Moreover, we get that any ring $R$ admitting a $\Sigma$-pure injective cotilting module of injective dimension at most 1 is necessarily coherent (see Corollary~\ref{coro:coherent}).

Finally, in the fourth section, we study cotilting torsion pairs over a noetherian ring, giving a complete characterization of  those induced by a $\Sigma$-pure injective cotilting module. In particular we prove (see Theorem~\ref{thm:torsionpairs}) that these are exactly the cotilting torsion pairs which satisfy the \emph{Reiten-Ringel condition} (see Condition~\ref{RRcond}).
This was originally introduced for finite dimensional $k$-algebras in \cite{RR} as a sufficient condition to guarantee that the closure under direct limits of a splitting torsion pair in the category of finitely generated modules is a splitting torsion pair in the category of all modules.

\section*{Notation and terminology}

Let $R$ be a ring. We denote by $\rMod R$ the category of right $R$-modules and by $\FP \ell$ the subcategory of the right $\ell$-presented $R$-modules, i.e. the modules $M$ in $\rMod R$ which admit a resolution
\[P_\ell\to ...\to P_1\to P_0\to M\to 0\]
where $P_i$ is a finitely generated projective module for $0\leq i\leq \ell$. Denote by $\rmod R$ the intersection $\cap_{\ell\in\mathbb N}\FP \ell$. In particular $\FP 0$ and $\FP 1$ are the categories of finitely generated and of finitely presented right $R$-modules, respectively.

For any module $C\in \rMod R$,  $\Prod C$  denotes the class of all direct summands of direct products of copies of $C$.

Given a class $\mathcal C\subseteq \rMod R$, we define the following:
 $${\mathcal C}^{\perp}=\{M\in \rMod R \mid \Ext_R^1(C, M)=0 \ \text{for any} \ C\in \mathcal C\}$$
 $${\mathcal C}^{\perp_{\infty}}=\{M\in \rMod R \mid \Ext_R^n(C, M)=0 \ \text{for any} \ C\in \mathcal C \ \text{and for any} \ n> 0\}. $$
  Similarly we define $^{\perp}{\mathcal C}$ and $^{\perp_{\infty}}{\mathcal C}$ .  
%Notice that if $\mathcal C$ is a \emph{resolving class}, i.e. it is closed under extensions, kernels of epimorphisms and it contains the projectives, then  $^{\perp}{\mathcal C}={^{\perp_{\intfy}}{\mathcal C}}$. 

A module $U\in \rMod R$ is an \emph{$n$-cotilting module}  if  $\Ext^i_R(U^\alpha, U)=0$ for any $i>0$ and all cardinals $\alpha$, $U$ has injective dimension at most $n$, and there exists a long exact sequence $0\to U_n \to \dots \to U_0\to W\to 0$ where $W$ is an injective cogenerator of $\rMod R$ and $U_i\in \Prod U$ for $i=0,\dots, n$.

Any cotilting module $U$ is pure-injective \cite{B, S}, so the class $\mathcal Y={^{\perp_{\infty}}U}$ is closed under direct limits. Moreover, if  $U$ has  injective dimension at most one, then $\mathcal Y$ coincides with the class of modules cogenerated by $U$; it is a torsion-free class and the corresponding torsion pair $(\mathcal X, \mathcal Y)$ is called the \emph{cotilting torsion pair} \emph{cogenerated by} $U$. Notice that the latter is a \emph{faithful} torsion pair, i.e. $R_R$ belongs to $\mathcal Y$.

Let $\mathcal A, \mathcal B\subseteq \rMod R$. The pair $(\mathcal A, \mathcal B)$ is called a \emph{cotorsion pair} if $\mathcal A={^{\perp}{\mathcal B}}$ and $\mathcal B={\mathcal A}{^{\perp}}$.  If $\mathcal B={\mathcal C}^{\perp}$ for a class of modules $\mathcal C$, we say that the cotorsion pair is \emph{generated} by $\mathcal C$. Similarly, if  $\mathcal A={^{\perp}{\mathcal C}}$ we say that the cotorsion pair is \emph{cogenerated} by $\mathcal C$. Moreover $(\mathcal A, \mathcal B)$ is said of \emph{finite type} if it is generated by a set of modules in $\rmod R$.
A cotorsion pair is called \emph{hereditary} if $\mathcal A={^{\perp_{\infty}}{\mathcal B}}$ and $\mathcal B={\mathcal A}{^{\perp_{\infty}}}$. A cotorsion pair  $(\mathcal A, \mathcal B)$ is hereditary if and only if $\mathcal A$ is a \emph{resolving class}, i.e. it is closed under extensions, kernels of epimorphisms and it contains the projectives,  or equivalently, if $\mathcal B$ is a \emph{coresolving class}, i.e., it is closed under extensions, cokernels of monomorphisms and  it contains the injectives \cite{GR}. 
A cotorsion pair is called \emph{complete} if $\mathcal A$ provides special precovers or, equivalently, if $\mathcal B$ provides special preenvelopes (see \cite[Ch. 2]{GT}).

 If $U$ is a cotilting module and $\mathcal Y={^{\perp_{\infty}}U}$, then the cotorsion pair $(\mathcal Y, \mathcal Y^{\perp})$ generated by $\mathcal Y$ is hereditary and complete \cite[Ch. 8]{GT}; its kernel $\mathcal Y\cap \mathcal Y^{\perp}$ coincides with $\Prod U$ \cite[Lemma~8.1.4]{GT}. In the sequel we will refer to $(\mathcal Y, \mathcal Y^{\perp})$ as the \emph{cotilting cotorsion pair  induced  by }$U$. 
 Finally, two cotilting modules are called \emph{equivalent} if they induce the same cotorsion pair.

Let $\sigma$ be an ordinal. An increasing chain of submodules $( M_{\alpha} \mid \alpha\leq \sigma)$ of a module $M$ is called a \emph{filtration} of $M$ provided that $M_0=0$, $M_{\alpha}=\cup_{\beta<\alpha} M_{\beta}$ for any limit ordinal $\alpha\leq \sigma$, and $M_{\sigma}=M$. Given a class of modules $\mathcal C$, a \emph{$\C$-filtration} of $M$ is a filtration such that, for any $\alpha<\sigma$, $M_{\alpha+1}/M_{\alpha}$ is isomorphic to some element of $\mathcal C$.

Filtrations play an important role in the study of cotorsion pairs, as widely described in \cite{GT}.  In particular, in the sequel we will often refer to the following version of Hill Lemma, stated and proved in a more general form  in \cite[Theorem~4.2.6]{GT}.

\begin{theorem}[Hill Lemma]\label{lemma:hill}
Let $R$ be  a ring and $\mathcal C$ a set of finitely presented modules. Let $M$ be a module with a $\mathcal C$-filtration $\mathcal M=( M_{\alpha} \mid \alpha\leq \sigma )$. 
Then there exists a family $\mathcal F$ of submodules of $M$ such that:
\begin{enumerate}
\item $\mathcal M\subseteq \mathcal F$;
\item $\mathcal F$ is closed under arbitrary sums and intersections;
\item For any $F_1$ and $F_2$ in $\mathcal F$ such that $F_1\leq F_2$, the module $F_2/F_1$ admits a $\mathcal C$-filtration.
% there exists a $\mathcal C$-filtration of   $F_1/F_2$, with composition factors isomorphic to some of $M$. 
%If $( P_{\gamma} \mid \gamma\leq \tau )$ is such a filtration, then for any $\gamma<\tau$ there is a $\beta<\sigma$ such that $P_{\gamma+1}/P_{\gamma}\cong M_{\beta+1}/M_{\beta}$.
\item For any finitely generated submodule $L$ of $M$, there exists $F\in \mathcal F$ such that $L\leq F$ and $F$ admits a finite $\C$-filtration. In particular, $F$  is finitely presented. 
\end{enumerate}
\end{theorem}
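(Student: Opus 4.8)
The plan is to realise $\F$ as the family of submodules attached to a complete sublattice of the power set of $\sigma$, following the classical Hill construction. For each $\alpha<\sigma$ I would fix a finitely generated submodule $B_\alpha\subseteq M_{\alpha+1}$ with $M_{\alpha+1}=M_\alpha+B_\alpha$ (possible since $M_{\alpha+1}/M_\alpha\in\C$ is finitely generated). The key observation is that $B_\alpha\cap M_\alpha$ is again finitely generated, being the kernel of the epimorphism $B_\alpha\tra M_{\alpha+1}/M_\alpha$ from a finitely generated module onto a \emph{finitely presented} one. Since $M_\alpha=\sum_{\beta<\alpha}B_\beta$ (an easy transfinite induction, which in the notation below also reads $M_{[0,\alpha)}=M_\alpha$), each of the finitely many generators of $B_\alpha\cap M_\alpha$ is a finite combination of elements of the $B_\beta$ with $\beta<\alpha$; collecting the indices used, I obtain for every $\alpha<\sigma$ a \emph{finite} set $s(\alpha)\subseteq\alpha$ with $B_\alpha\cap M_\alpha\subseteq\sum_{\beta\in s(\alpha)}B_\beta$.

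Call a subset $S\subseteq\sigma$ \emph{closed} if $\alpha\in S$ implies $s(\alpha)\subseteq S$, and set $M_S:=\sum_{\alpha\in S}B_\alpha$. Because the defining condition is local at each ordinal, the closed subsets of $\sigma$ form a complete sublattice of its power set — in particular they are stable under arbitrary unions and intersections — every initial segment $[0,\alpha)$ is closed with $M_{[0,\alpha)}=M_\alpha$, and clearly $\sum_iM_{S_i}=M_{\bigcup_iS_i}$. Taking $\F:=\{M_S\mid S\subseteq\sigma\ \text{closed}\}$, condition (1) and the ``sums'' half of (2) are then immediate.

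The technical core — and the step I expect to be the main obstacle — is the intersection identity $M_{S_1}\cap M_{S_2}=M_{S_1\cap S_2}$ for closed $S_1,S_2$, which finishes (2). I would deduce it from the auxiliary statement
\[M_S\cap M_\alpha=M_{S\cap[0,\alpha)}\qquad(S\ \text{closed},\ \alpha\leq\sigma),\]
proved by transfinite induction on $\alpha$: the limit step is routine, whereas the successor step — where the finite supports $s(\alpha)$ and the choice of the $B_\alpha$ enter through a modular-law computation — is the delicate point and appears to require a secondary induction on the closed set involved; this is precisely where Hill's original argument does its work. Granting the identity, condition (3) follows: for $F_1=M_S\leq F_2=M_T$ with $S,T$ closed we have $M_S=M_S\cap M_T=M_{S\cap T}$, so after replacing $S$ by $S\cap T$ we may assume $S\subseteq T$; enumerating $T\setminus S$ increasingly as $(\gamma_\nu)_{\nu<\tau}$ and setting $N_\nu:=M_{S\cup\{\gamma_\mu\mid\mu<\nu\}}$ (each index set being closed), one checks
\[N_{\nu+1}/N_\nu\;\cong\;B_{\gamma_\nu}/(B_{\gamma_\nu}\cap N_\nu)\;\cong\;B_{\gamma_\nu}/(B_{\gamma_\nu}\cap M_{\gamma_\nu})\;\cong\;M_{\gamma_\nu+1}/M_{\gamma_\nu}\in\C,\]
the middle isomorphism resting on $s(\gamma_\nu)\subseteq T$ together with the intersection identity; thus $(N_\nu/M_S)_{\nu\leq\tau}$ is a $\C$-filtration of $F_2/F_1$.

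Finally, for (4): given a finitely generated $L\subseteq M=\sum_{\alpha<\sigma}B_\alpha$, choose a finite $E\subseteq\sigma$ with $L\subseteq\sum_{\alpha\in E}B_\alpha$ and let $S$ be the closure of $E$ under $s$. Because each $s(\alpha)$ is finite and contained in $[0,\alpha)$, a transfinite induction on the largest element shows that the closure under $s$ of any finite set of ordinals is again finite, so $S$ is a \emph{finite} closed set; hence $L\subseteq M_S\in\F$, and by (3) applied to $\emptyset\subseteq S$ the module $M_S$ admits a finite $\C$-filtration. Since $\C$ consists of finitely presented modules and an extension of finitely presented modules is finitely presented, $M_S$ is finitely presented, completing the verification.
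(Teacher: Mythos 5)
Your construction is, in all essentials, the standard Hill argument that the paper itself does not reproduce but simply cites from [GT, Theorem~4.2.6], so the architecture is the right one: the choice of finitely generated $B_\alpha$ with $M_{\alpha+1}=M_\alpha+B_\alpha$, the observation that $B_\alpha\cap M_\alpha$ is finitely generated because it is the kernel of a surjection from a finitely generated module onto a finitely \emph{presented} one, the finite supports $s(\alpha)$, the lattice of closed subsets of $\sigma$, and the derivation of (1)--(4) from the intersection identity are all correct. Your verification of (3) via the increasing enumeration of $T\setminus S$ and of (4) via the finiteness of the $s$-closure of a finite set (a clean well-founded induction) are sound; note that your ``closed'' ($\alpha\in S\Rightarrow s(\alpha)\subseteq S$) is a priori stronger than the condition $B_\alpha\cap M_\alpha\subseteq\sum_{\beta\in S\cap\alpha}B_\beta$ used in the reference, but your family is still stable under arbitrary unions and intersections and contains all initial segments, so nothing is lost and statement (4) comes out slightly more directly.

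The one place where your text is a proposal rather than a proof is exactly the step you flag yourself: the identity $M_{S_1}\cap M_{S_2}=M_{S_1\cap S_2}$ (equivalently your auxiliary statement $M_S\cap M_\alpha=M_{S\cap[0,\alpha)}$) is asserted with a gesture toward ``a modular-law computation'' but not carried out. Everything else in your argument -- the ``intersections'' half of (2), the middle isomorphism $B_{\gamma_\nu}\cap N_\nu=B_{\gamma_\nu}\cap M_{\gamma_\nu}$ in (3), and hence (4) -- rests on it, so as written the proof is incomplete at its load-bearing point. The missing computation is genuinely the crux of Hill's lemma: one takes $x\in M_S\cap M_T$, writes $x=\sum_{\alpha\in E}b_\alpha$ with $E\subseteq S$ finite and $b_\alpha\in B_\alpha$, and inducts on the largest $\alpha\in E\setminus T$, using $B_\alpha\cap M_\alpha\subseteq\sum_{\beta\in s(\alpha)}B_\beta$ and closedness of $S$ to replace $b_\alpha$ by elements supported on smaller indices. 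You should either supply this induction or state the identity as a quoted lemma from the reference; with that done, the rest of your argument is complete and correct.
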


\section{Comparing cotorsion pairs}

Given a cotorsion pair $\mathfrak A:=(\mathcal A, \mathcal A^{\perp})$,
%induced by a 1-cotilting right $R$-module $U$.
we denote by $\mathcal A_i$ the class of modules in $\mathcal A$ which belong to $\FP i$, $i\geq 0$, and by $\mathcal A_{\infty}$ the class of modules in $\mathcal A$ which belong to $\rmod R$.
Beside $\mathfrak A=(\mathcal A, \mathcal A^{\perp})$, we consider the cotorsion pairs $\mathfrak A_i=({}^{\perp}(\mathcal A_i^{\perp}), \mathcal A_i^{\perp})$ generated by the set $\mathcal A_i$, $i\geq 0$, and $\mathfrak A_{\infty}=({}^{\perp}(\mathcal A_{\infty}^{\perp}), \mathcal A_{\infty}^{\perp})$ generated by the set $\mathcal A_{\infty}$. Considering the partial order induced by the inclusion of their first components, we have the following chain of cotorsion pairs:
\[\mathfrak A_{\infty}\leq...\leq\mathfrak A_{i+1}\leq\mathfrak A_{i}\leq ... \mathfrak A_1\leq \mathfrak A_0\leq \mathfrak A.
\]
The cotorsion pairs $\mathfrak A_i$, $0\leq i\leq \infty$, are complete, and their cotorsion classes ${}^{\perp}(\mathcal A_i^{\perp})$ consist of all direct summands of $\mathcal A_i$-filtered modules (see \cite[Theorem~3.2.1 and Corollary~3.2.4]{GT}). Moreover, by definition, the cotorsion pair $\mathfrak A_{\infty}$ is of {finite type}.
%; in particular
%${}^{\perp}(\mathcal Y_{\infty}^{\perp})=\varinjlim Y_{\infty}$

\begin{proposition}\label{prop:generale}
%(Serve $\mathcal Y$ cotilting?)
Let $\mathfrak A=(\mathcal A,\mathcal A^\perp)$ be a cotorsion pair, and $\mathcal S$ be any set of modules in $\mathcal A$. Assume $\mathcal A$ is a resolving class.
Then the following are equivalent:
\begin{enumerate}
\item $\mathcal S^{\perp}=
\mathcal A^\perp$;
\item $\mathcal S^{\perp}\cap\mathcal A=
\mathcal A^\perp\cap\mathcal A$.
\end{enumerate}
%
%The following are equivalent:
%\begin{enumerate}
%\item $\mathcal Y_0^{\perp}=
%\mathcal Y^\perp$;
%\item $\mathcal Y_0^{\perp}\cap\mathcal Y=
%\mathcal Y^\perp\cap\mathcal Y$.
%\end{enumerate}
\end{proposition}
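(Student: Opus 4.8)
The plan is to prove the equivalence in \textbf{Proposition~\ref{prop:generale}}, noting that the implication (1)\,$\Rightarrow$\,(2) is immediate since intersecting both sides of $\mathcal S^\perp=\mathcal A^\perp$ with $\mathcal A$ gives (2). So the whole content is in (2)\,$\Rightarrow$\,(1). Here one inclusion is free: since $\mathcal S\subseteq\mathcal A$, every module in $\mathcal A^\perp$ is $\Ext^1$-orthogonal to $\mathcal S$, hence $\mathcal A^\perp\subseteq\mathcal S^\perp$ always. The real task is to show $\mathcal S^\perp\subseteq\mathcal A^\perp$ under hypothesis (2).

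So let me fix $N\in\mathcal S^\perp$; I want $\Ext^1_R(A,N)=0$ for every $A\in\mathcal A$. The strategy is to bring a module of $\mathcal A$ into the picture via a \emph{special precover}. Since $\mathfrak A=(\mathcal A,\mathcal A^\perp)$ need not be complete in general, I cannot do this directly with $\mathfrak A$ itself; instead I would pass to the completeness that \emph{is} available. The natural candidate is a cotorsion pair that is known to be complete and whose first component lies inside $\mathcal A$ — for instance the cotorsion pair $\mathfrak S=({}^\perp(\mathcal S^\perp),\mathcal S^\perp)$ generated by the set $\mathcal S$, which is complete by \cite[Theorem~3.2.1]{GT}, with ${}^\perp(\mathcal S^\perp)$ consisting of direct summands of $\mathcal S$-filtered modules. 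Given $A\in\mathcal A$, completeness of $\mathfrak S$ yields a short exact sequence $0\to N'\to B\to A\to 0$ with $B\in{}^\perp(\mathcal S^\perp)$ and $N'\in\mathcal S^\perp$. The key observations are then: first, $B\in\mathcal A$, because $B$ is (a summand of) an $\mathcal S$-filtered module, $\mathcal S\subseteq\mathcal A$, and $\mathcal A$ — being resolving, hence closed under extensions and transfinite extensions (by the Eklof lemma, \cite{GT}) and under direct summands — contains all such modules; second, $N'\in\mathcal S^\perp\cap\mathcal A^{??}$ — wait, we only know $N'\in\mathcal S^\perp$, not a priori in $\mathcal A$. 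To fix this, observe instead that from $B\in\mathcal A$ and the resolving property (closure under kernels of epimorphisms), and the fact that we can realize $A$ as a quotient of a module in $\mathcal A$ by a module in $\mathcal A$: actually the cleaner route is to note $N'=\Ker(B\to A)$ lies in $\mathcal A$ because $\mathcal A$ is resolving and $B,A\in\mathcal A$. Hence $N'\in\mathcal S^\perp\cap\mathcal A$, so by hypothesis (2), $N'\in\mathcal A^\perp\cap\mathcal A$, and in particular $N'\in\mathcal A^\perp$.

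Now I finish by a dimension-shift / long-exact-sequence argument. Apply $\Hom_R(-,N)$ to $0\to N'\to B\to A\to 0$. Since $B\in{}^\perp(\mathcal S^\perp)$ and $N\in\mathcal S^\perp$, we get $\Ext^1_R(B,N)=0$. So from the exact sequence
\[
\Ext^1_R(B,N)\longrightarrow\Ext^1_R(N',N)\longrightarrow\Ext^2_R(A,N)\quad\text{?}
\]
hmm, the connecting map goes the other way; the relevant piece is $\Ext^1_R(B,N)\to\Ext^1_R(N',N)\to\Ext^2_R(A,N)$, which is not quite what I want for $\Ext^1(A,N)$. Let me reorganize: the exact sequence reads
\[
\Hom_R(N',N)\to\Ext^1_R(A,N)\to\Ext^1_R(B,N)\to\Ext^1_R(N',N)\to\cdots
\]
so $\Ext^1_R(A,N)$ is sandwiched between $\operatorname{Coker}(\Hom(B,N)\to\Hom(N',N))$ and $\Ext^1_R(B,N)=0$. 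That alone does not kill $\Ext^1(A,N)$. The remedy — and I expect this to be \textbf{the main obstacle} — is to use the \emph{special precover} property more carefully: in a special precover sequence $0\to N'\to B\to A\to 0$ arising from a complete cotorsion pair, one in fact has $N'\in\mathcal S^\perp$ \emph{and} the map $B\to A$ is a special $\mathcal S$-precover; combined with $A\in\mathcal A$ this should let me conclude that the sequence is $\Hom_R(-,N)$-exact because $\Ext^1_R(B,N)=0$ \emph{and} I can choose the precover so that additionally $\Ext^1_R(\text{something})$ vanishes. Alternatively, the slick finish: since $N'\in\mathcal A^\perp$ (shown above) \emph{and} $B\in{}^\perp(\mathcal S^\perp)\supseteq$ covers the needed orthogonality, we actually get that $0\to N'\to B\to A\to 0$ splits the obstruction — more precisely, $\Ext^1_R(A,N)$ injects into $\Ext^1_R(B,N)=0$ once we know $\Hom_R(B,N)\to\Hom_R(N',N)$ is surjective, and this surjectivity holds because $N'\in\mathcal A^\perp$ forces $\Ext^1_R(A,N')=0$ (as $A\in\mathcal A$), which means the sequence $0\to N'\to B\to A\to 0$ itself \emph{splits}; then $A$ is a direct summand of $B\in{}^\perp(\mathcal S^\perp)$, hence $A\in{}^\perp(\mathcal S^\perp)$, and therefore $\Ext^1_R(A,N)=0$ for all $N\in\mathcal S^\perp$. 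That is the clean argument: \emph{hypothesis (2) upgrades $N'$ to $\mathcal A^\perp$, which splits the special precover sequence, which puts $A$ inside ${}^\perp(\mathcal S^\perp)$, giving $\mathcal A\subseteq{}^\perp(\mathcal S^\perp)$ and hence $\mathcal S^\perp\subseteq\mathcal A^\perp$.} I would double-check that $\mathcal A$ resolving indeed gives closure under the transfinite extensions needed to place $B$ in $\mathcal A$ (Eklof's lemma), and that $N'\in\mathcal A$ via closure under kernels of epimorphisms between objects of $\mathcal A$.
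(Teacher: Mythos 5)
Your final argument is correct and is essentially the paper's own proof: take a special $\,{}^\perp(\mathcal S^\perp)$-precover $0\to N'\to B\to A\to 0$ of $A\in\mathcal A$, use the resolving hypothesis to place $N'$ in $\mathcal S^\perp\cap\mathcal A=\mathcal A^\perp\cap\mathcal A$, conclude the sequence splits, and deduce $A\in{}^\perp(\mathcal S^\perp)$. The only superfluous step is the appeal to Eklof's lemma to get $B\in\mathcal A$; this follows at once from $\mathcal A^\perp\subseteq\mathcal S^\perp$, which gives ${}^\perp(\mathcal S^\perp)\subseteq{}^\perp(\mathcal A^\perp)=\mathcal A$.
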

\begin{proof}
Clearly 1 implies 2. Assume 2 holds; $\mathcal S^{\perp}\supseteq
\mathcal A^\perp$ is always true. Therefore
$\mathfrak S=({}^\perp(\mathcal S^\perp), \mathcal S^\perp)\leq (\mathcal A,\mathcal A^\perp)$.
 Let $M$ be in $\mathcal A$. By \cite[Theorem 3.2.1]{GT} $\mathfrak S$ is a complete cotorsion pair, therefore the class ${}^\perp(\mathcal S^\perp)$ gives special precovers. Let
\[0\to S_1\to S_2\to M\to 0\]
be a ${}^\perp(\mathcal S^\perp)$-special precover of $M$. Then $S_2$ belongs to ${}^\perp(\mathcal S^\perp)\subseteq\mathcal A$ and $S_1$ belongs to  $\mathcal S^\perp$. Since $\mathcal A$ is resolving, the module $S_1$ belongs to \[\mathcal S^\perp\cap\mathcal A=\mathcal A^\perp\cap\mathcal A.\]
Then the above exact sequence splits and $M$ is a direct summand of $S_2$, so $M$ belongs to ${}^\perp(\mathcal S^\perp)$. Therefore $\mathfrak S=(\mathcal A,\mathcal A^\perp)$ and we conclude $\mathcal S^{\perp}=
\mathcal A^\perp$.
%
%------------------

%Clearly 1 implies 2. Assume 2 holds; $\mathcal Y_0^{\perp}\supseteq
%\mathcal Y^\perpten$ is always true. Let $M$ be in $\mathcal Y$. By \cite[Theorem 3.2.1]{GT} $\mathfrak Y_0$ is a complete cotorsion pair, therefore the class ${}^\perp(\mathcal Y_0^\perp)$ gives special precovers. Let
%\[0\to A\to B\to M\to 0\]
%be a ${}^\perp(\mathcal Y_0^\perp)$-special precover of $M$. Then $B$ belongs to ${}^\perp(\mathcal Y_0^\perp)\subseteq\mathcal Y$ and $A$ belongs to  $\mathcal Y_0^\perp$. Since $A\leq B$, the module $A$ belongs to \[\mathcal Y_0^\perp\cap\mathcal Y=\mathcal Y^\perp\cap\mathcal Y.\]
%Then the exact sequence above splits and $M$ is a direct summands of $B$, and hence $M$ belongs to ${}^\perp(\mathcal Y_0^\perp)$. Therefore $\mathfrak Y_0=\mathfrak Y$ and we conclude $\mathcal Y_0^{\perp}=
%\mathcal Y^\perp$.
\end{proof}

\begin{corollary}
Let $\mathcal A$ be a resolving class and $0\leq i\leq \infty$.
Then $\mathfrak A_i=\mathfrak A$ if and only if
\[\mathcal A_i^{\perp}\cap \mathcal A=\mathcal A^{\perp}\cap \mathcal A.\]
\end{corollary}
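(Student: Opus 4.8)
The plan is to derive this corollary directly from Proposition~\ref{prop:generale} by choosing the set $\mathcal S$ appropriately. First I would take $\mathcal S=\mathcal A_i$, which is by hypothesis a set of modules contained in $\mathcal A$ (for $0\le i<\infty$ it is the set of modules in $\mathcal A$ lying in $\FP i$, and for $i=\infty$ the set of those lying in $\rmod R$). Since $\mathcal A$ is resolving, Proposition~\ref{prop:generale} applies verbatim with this choice of $\mathcal S$, giving the equivalence
\[
\mathcal A_i^{\perp}=\mathcal A^{\perp}\quad\Longleftrightarrow\quad \mathcal A_i^{\perp}\cap\mathcal A=\mathcal A^{\perp}\cap\mathcal A.
\]
So it remains only to translate the left-hand condition $\mathcal A_i^{\perp}=\mathcal A^{\perp}$ into the statement $\mathfrak A_i=\mathfrak A$.

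For that translation I would recall that $\mathfrak A_i=({}^{\perp}(\mathcal A_i^{\perp}),\mathcal A_i^{\perp})$ and $\mathfrak A=(\mathcal A,\mathcal A^{\perp})=({}^{\perp}(\mathcal A^{\perp}),\mathcal A^{\perp})$, where the last equality uses that $\mathfrak A$ is a cotorsion pair. A cotorsion pair is completely determined by its second component, so $\mathfrak A_i=\mathfrak A$ holds if and only if $\mathcal A_i^{\perp}=\mathcal A^{\perp}$: one direction is immediate from the definitions, and conversely if the second components agree then so do the first, since the first component of any cotorsion pair is recovered as the left $\perp$ of the second. Combining this observation with the displayed equivalence above yields exactly the claim $\mathfrak A_i=\mathfrak A\iff \mathcal A_i^{\perp}\cap\mathcal A=\mathcal A^{\perp}\cap\mathcal A$.

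The argument is essentially bookkeeping, so I do not anticipate a genuine obstacle; the only point requiring a moment's care is that $\mathcal A_i$ is indeed a \emph{set} rather than a proper class, which is what licenses the use of Proposition~\ref{prop:generale} (whose hypothesis asks for a set $\mathcal S$). This holds because $\FP i$, and more generally $\rmod R$, contains only a set of isomorphism classes of modules — each such module is a quotient of a fixed finitely generated free module by a submodule, hence there is a representative set — so $\mathcal A_i$ may be replaced by a representative set without changing $\mathcal A_i^{\perp}$. With that noted, the corollary follows.
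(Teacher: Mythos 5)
Your proposal is correct and is exactly the argument the paper intends: the corollary is stated without proof as the immediate specialization of Proposition~\ref{prop:generale} to $\mathcal S=\mathcal A_i$, combined with the observation that a cotorsion pair is determined by its second component. Your remark that $\mathcal A_i$ is a set up to isomorphism is a legitimate (and correctly resolved) point of care.
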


\begin{proposition}\label{prop:A_{i+n}=A_i}
Let $i\geq 0$. Then $\mathfrak A_i=\mathfrak A_{i+1}$ if and only if $\mathcal A_i=\mathcal A_{i+1}$. If moreover $\mathcal A$ is a resolving class, then $\mathfrak A_{i}=\mathfrak A_{\infty}$.
\end{proposition}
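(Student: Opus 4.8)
The statement has two parts. First, $\mathfrak A_i = \mathfrak A_{i+1}$ iff $\mathcal A_i = \mathcal A_{i+1}$. The "if" direction is immediate: if the generating sets coincide then so do the generated cotorsion pairs. For "only if", suppose $\mathfrak A_i = \mathfrak A_{i+1}$, i.e. $\mathcal A_i^\perp = \mathcal A_{i+1}^\perp$. We always have $\mathcal A_{i+1} \subseteq \mathcal A_i$ (an $(i{+}1)$-presented module is $i$-presented), so the task is to show $\mathcal A_i \subseteq \mathcal A_{i+1}$. Take $M \in \mathcal A_i$, so $M \in \FP i$ and $M \in \mathcal A$. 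Since ${}^\perp(\mathcal A_{i+1}^\perp)$ consists of direct summands of $\mathcal A_{i+1}$-filtered modules and $M \in \mathcal A = {}^\perp(\mathcal A^\perp) = {}^\perp(\mathcal A_{i+1}^\perp) = {}^\perp(\mathcal A_i^\perp)$ — wait, here one must be careful: $M \in \mathcal A$ does not automatically put $M$ in ${}^\perp(\mathcal A_i^\perp)$ unless $\mathfrak A_i = \mathfrak A$. So instead I would argue directly: $M$ is finitely presented ($i \geq 1$; the case $i=0$ needs the ring to be... no — for $i=0$, $\FP 0 = \FP 1$? not in general). Let me reconsider.

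Actually the clean route is: $M \in \mathcal A_i \subseteq \mathcal A = {}^\perp(\mathcal A^\perp)$, and since $\mathfrak A_i = \mathfrak A_{i+1}$ forces $\mathcal A_i^\perp = \mathcal A_{i+1}^\perp$, the module $M$ lies in ${}^\perp(\mathcal A_{i+1}^\perp)$ **provided** $M \in {}^\perp(\mathcal A_i^\perp)$, which holds because $\mathcal A_i \subseteq {}^\perp(\mathcal A_i^\perp)$ trivially (every module is in the left perp of its own right perp). So $M$ is a direct summand of an $\mathcal A_{i+1}$-filtered module $N$. Now I would invoke the Hill Lemma (Theorem \ref{lemma:hill}) applied to the $\mathcal A_{i+1}$-filtration of $N$, with the set $\mathcal C = \mathcal A_{i+1}$ of finitely presented modules: for the finitely generated submodule $L$ of $N$ that is the image of a splitting projection realizing $M$ as a summand, there is $F \in \mathcal F$ with $L \leq F$ and $F$ finitely presented with a finite $\mathcal A_{i+1}$-filtration; then $F \in \mathcal A$ by closure under extensions, and the retraction $N \to M$ restricts appropriately so that $M$ becomes a direct summand of $F$. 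Since $F$ has a finite $\mathcal A_{i+1}$-filtration by $(i{+}1)$-presented modules, a standard horseshoe/induction argument on the length of the filtration shows $F \in \FP{i+1}$, hence $F \in \mathcal A_{i+1}$, and as $\mathcal A_{i+1}$ is closed under direct summands (kernels of split epis, using that $\mathcal A$ is closed under summands and $\FP{i+1}$ is closed under summands), $M \in \mathcal A_{i+1}$. This gives $\mathcal A_i = \mathcal A_{i+1}$.

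For the second assertion, assume $\mathcal A$ is resolving and $\mathfrak A_i = \mathfrak A_{i+1}$; by the first part $\mathcal A_i = \mathcal A_{i+1}$. The plan is to bootstrap this upward: I claim $\mathcal A_{i+1} = \mathcal A_{i+2}$ as well, whence by induction $\mathcal A_i = \mathcal A_{i+n}$ for all $n$, and therefore $\mathcal A_i = \bigcap_n \mathcal A_{i+n} = \mathcal A \cap \rmod R = \mathcal A_\infty$, giving $\mathfrak A_i = \mathfrak A_\infty$. To see $\mathcal A_{i+1} = \mathcal A_{i+2}$: take $M \in \mathcal A_{i+1} = \mathcal A_i$, pick a projective resolution $\cdots \to P_1 \to P_0 \to M \to 0$ with $P_0, \ldots, P_{i+1}$ finitely generated, and let $\Omega$ be the $(i{+}1)$st syzygy (the kernel of $P_i \to P_{i-1}$), which is finitely generated. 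Because $\mathcal A$ is resolving, $\Omega \in \mathcal A$, and it is $1$-presented hence in $\mathcal A_1 \subseteq \mathcal A_i$... hmm, this requires $i \geq 1$ and uses $\mathcal A_1 = \mathcal A_i$ which I do not yet know. The correct bootstrap: from $\mathcal A_i = \mathcal A_{i+1}$, for $M \in \mathcal A_i$ with syzygy $\Omega$ (kernel of $P_{i} \to P_{i-1}$, finitely presented since $P_0,\dots,P_{i+1}$ f.g.), resolvingness gives $\Omega \in \mathcal A \cap \FP 1 \subseteq \mathcal A_i = \mathcal A_{i+1}$, so $\Omega \in \FP{i+1}$, i.e. $\Omega$ has a resolution by f.g. projectives out to step $i{+}1$; splicing with $P_0, \ldots, P_{i}$ shows $M \in \FP{2i+1}$, in particular $M \in \FP{i+2}$, so $\mathcal A_i \subseteq \mathcal A_{i+2}$ and thus $\mathcal A_{i+1} = \mathcal A_{i+2}$. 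Iterating, $M \in \FP\ell$ for every $\ell$, i.e. $M \in \rmod R$, so $\mathcal A_i \subseteq \mathcal A_\infty$; the reverse inclusion is trivial, and $\mathfrak A_i = \mathfrak A_\infty$ follows.

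The main obstacle I anticipate is the first part's passage from "$M$ is a summand of an $\mathcal A_{i+1}$-filtered module" to "$M \in \mathcal A_{i+1}$": one must combine the Hill Lemma (to cut down to a finitely presented submodule $F$ with a finite $\mathcal A_{i+1}$-filtration containing a chosen finitely generated piece), the closure of $\mathcal A$ under extensions and summands, and the fact that a finite $\mathcal A_{i+1}$-filtration of a finitely presented module forces membership in $\FP{i+1}$ — each individually routine, but needing care that the retraction onto $M$ factors through $F$. (When $i=0$ one should check the $\FP 0$ vs.\ finite-filtration bookkeeping separately, since $\FP 0$-modules are only finitely generated, not finitely presented; the Hill Lemma hypothesis asks for finitely presented $\mathcal C$, so for $i=0$ one instead works with $\mathcal A_0$-filtrations directly without Hill, using only that $\mathcal A$ is closed under summands.)
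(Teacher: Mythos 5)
Your first part is correct and is essentially the paper's argument: from $\mathfrak A_i=\mathfrak A_{i+1}$ one gets that any $M\in\mathcal A_i$ lies in ${}^\perp(\mathcal A_{i+1}^\perp)$, hence is a direct summand of an $\mathcal A_{i+1}$-filtered module; the Hill Lemma cuts this down to a finitely presented module with a finite $\mathcal A_{i+1}$-filtration having $M$ as a direct summand, and closure of $\mathcal A$ and of $\FP{i+1}$ under extensions and direct summands finishes. (Your hedge about $i=0$ is unnecessary: the filtration is always by $\mathcal A_{i+1}\subseteq\FP{i+1}\subseteq\FP{1}$, so the Hill Lemma's hypothesis that $\mathcal C$ consist of finitely presented modules is met for every $i\geq 0$; no separate case is needed.)

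The second part, however, contains a genuine error. You take $\Omega=\Ker(P_i\to P_{i-1})$ and assert both that it is finitely presented and that $\mathcal A\cap\FP{1}\subseteq\mathcal A_i$. Neither holds: $\Omega$ is the image of the finitely generated module $P_{i+1}$, hence is only known to be finitely generated (in $\FP{0}$, not $\FP{1}$), and the inclusion between $\mathcal A_1=\mathcal A\cap\FP{1}$ and $\mathcal A_i$ goes the other way ($\mathcal A_i\subseteq\mathcal A_1$ since $\FP{i}\subseteq\FP{1}$); asserting $\mathcal A_1\subseteq\mathcal A_i$ begs precisely the kind of equality the proposition is about. The repair --- which is the paper's route --- is to use the \emph{first} syzygy $I_1=\Ker(P_0\to M)$ instead of a deep one: $I_1$ lies in $\FP{i}$ (it is resolved by $P_{i+1}\to\cdots\to P_1$) and in $\mathcal A$ by resolvingness, so $I_1\in\mathcal A\cap\FP{i}=\mathcal A_i=\mathcal A_{i+1}\subseteq\FP{i+1}$, and therefore the finitely generated projective resolution of $M$ can be extended one step to the left, giving $M\in\FP{i+2}$. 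Iterating yields $M\in\rmod R$, i.e.\ $\mathcal A_{i+1}=\mathcal A_\infty$, whence $\mathfrak A_i=\mathfrak A_{i+1}=\mathfrak A_\infty$. Your surrounding bookkeeping (that $\mathcal A_i=\bigcap_n\mathcal A_{i+n}=\mathcal A_\infty$ implies $\mathfrak A_i=\mathfrak A_\infty$) is fine once this step is corrected.
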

\begin{proof}
By \cite[Corollary~3.2.4]{GT}, ${}^\perp(\mathcal A_{i+1}^\perp)={}^\perp(\mathcal A_{i}^\perp)$ consists of all direct summands of $\mathcal A_{i+1}$-filtered modules. In particular any module $A$ in $\mathcal A_i$ is a direct summand of a $\mathcal A_{i+1}$-filtered module. Since $A$ is finitely generated, by  Theorem~\ref{lemma:hill}  it is a direct summand of a finitely presented module $A'$ admitting a finite $\mathcal A_{i+1}$-filtration. Therefore, since $A'$ is a finite extension of modules in $\mathcal A_{i+1}$ and $A$ is a direct summand of $A'$, both $A$ and $A'$ belong to $\mathcal A_{i+1}$. Then $\mathcal A_i$ is contained in $\mathcal A_{i+1}$ and hence $\mathcal A_{i+1}=\mathcal A_i$.

Finally, let us assume $\mathcal A$ resolving and  let $M$ belong to $\mathcal A_{i+1}$; then there exists an exact sequence
\[P_{i+1}\to P_{i}\to ...\to P_0\to M\to 0\]
where $P_\ell$ is a finitely generated projective module for $0\leq \ell\leq i+1$.
Let us denote by $I_\ell$ the image of $P_\ell\to P_{\ell-1}$; since $\mathcal A$ is closed under kernels of epimorphisms, it is easy to prove recursively that $I_\ell$ belongs to $\mathcal A_{i+1-\ell}$. In particular $I_1$ belongs to $\mathcal A_{i}=\mathcal A_{i+1}$, and hence the finitely generated projective resolution of $M$ can be continued with one more step on the left. Repeating this argument, we conclude that $M$ belongs to $\mathcal A_{\infty}$.
\end{proof}
%
%A ring $R$ is said to be \emph{right $n$-coherent} if any $n$-presented right ideal is $(n+1)$-presented.

%\begin{corollary}
%If the $n$-presented left ideals of $R$ belong to $\mathcal A$ (in particular if $\mathcal A$ is closed under submodule),  then $R$ is $n$-coherent.
%\end{corollary}

\begin{proposition}\label{prop:limiti}
Let $i\geq 1$; if $\mathfrak A=\mathfrak A_i$, then $\mathcal A\subseteq \{\text{direct summands of }\varinjlim \mathcal A_i\}$.
\end{proposition}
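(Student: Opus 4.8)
The plan is to show that every $M \in \mathcal A$ is a direct summand of a direct limit of modules in $\mathcal A_i$, using the completeness of the cotorsion pair $\mathfrak A = \mathfrak A_i$ together with the description of $\mathcal A = {}^\perp(\mathcal A_i^\perp)$ as the class of direct summands of $\mathcal A_i$-filtered modules, and then applying the Hill Lemma to present such a filtration as a suitable direct limit.

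First I would reduce to the case where $M$ itself admits an $\mathcal A_i$-filtration $\mathcal M = (M_\alpha \mid \alpha \le \sigma)$; the general case follows since the class of direct summands of modules in $\varinjlim \mathcal A_i$ is closed under direct summands. Now invoke the Hill Lemma (Theorem~\ref{lemma:hill}) applied to this $\mathcal C$-filtration with $\mathcal C = \mathcal A_i$ — here we use $i \ge 1$, so that the modules in $\mathcal A_i$ are finitely presented, as required by the hypotheses of the Hill Lemma. This yields a family $\mathcal F$ of submodules of $M$, closed under sums and intersections, such that each quotient $F_2/F_1$ (for $F_1 \le F_2$ in $\mathcal F$) is $\mathcal A_i$-filtered, and such that every finitely generated submodule of $M$ is contained in some $F \in \mathcal F$ that admits a \emph{finite} $\mathcal A_i$-filtration. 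Since $\mathcal A_i \subseteq \mathcal A$ and $\mathcal A$ is closed under extensions, such an $F$ lies in $\mathcal A$; being finitely presented, it lies in $\mathcal A_i$ (here $i \ge 1$ is used again to ensure ``finitely presented'' suffices to land in $\mathcal A_i$ rather than only $\mathcal A_1$ — more precisely each such finite-filtered $F$ is finitely presented, hence in $\mathcal A \cap \FP 1 = \mathcal A_1$, and one needs $i=1$ or an additional argument for $i>1$; I will return to this point).

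The key step is then to express $M$ as the direct limit of the directed system consisting of those $F \in \mathcal F$ which admit a finite $\mathcal A_i$-filtration, ordered by inclusion. This system is directed: given two such $F, F'$, their sum $F + F'$ lies in $\mathcal F$ by closure under sums, and the quotient $(F+F')/F \cong F'/(F \cap F')$ is $\mathcal A_i$-filtered by Hill~(3); combined with a finite $\mathcal A_i$-filtration of $F$, and the fact that $F+F'$ is finitely generated (hence contained in some finite-filtered member of $\mathcal F$ by Hill~(4)), one gets an upper bound among the finite-filtered members. The union of this system is all of $M$ because every element of $M$ lies in a finitely generated submodule, hence in some finite-filtered $F$ by Hill~(4); and since the system is a system of submodules of $M$ with union $M$, its direct limit is $M$. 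As every member of the system lies in $\mathcal A_i$, this exhibits $M \in \varinjlim \mathcal A_i$, and the proposition follows.

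The main obstacle I anticipate is the bookkeeping around which $\FP j$ the finite-filtered Hill submodules actually belong to: the Hill Lemma guarantees they are finitely presented, i.e.\ in $\FP 1$, so they lie in $\mathcal A_1$ but not obviously in $\mathcal A_i$ for $i \ge 2$. For $i = 1$ this is immediate. For $i \ge 2$ one must upgrade: since $\mathfrak A = \mathfrak A_i$ and, by Proposition~\ref{prop:A_{i+n}=A_i} applied under the resolving hypothesis, one has control over the chain $\mathfrak A_i \le \mathfrak A_1$; however $\mathcal A$ is not assumed resolving in the present statement, so instead I expect the intended reading is that the finite-$\mathcal A_i$-filtered modules are automatically in $\mathcal A_i$ because a finite extension of members of $\mathcal A_i \subseteq \FP i$ is again in $\FP i$ (finite extensions preserve $\FP i$ for every $i$, including $i=\infty$), and membership in $\mathcal A$ follows from closure under extensions. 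This is the crux of the argument and the place where the hypothesis $i \ge 1$ (ensuring the filtration consists of finitely presented modules so Hill applies) is essential; once it is settled, the directed-limit construction above goes through verbatim.
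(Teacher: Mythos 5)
Your proof is correct and follows essentially the same route as the paper's: realize $M$ (up to a direct summand) as an $\mathcal A_i$-filtered module, then use the Hill Lemma to exhibit it as a directed union of finitely $\mathcal A_i$-filtered submodules, each of which lies in $\mathcal A_i$ because finite extensions preserve $\FP{i}$ and $\mathcal A_i$-filtered modules lie in ${}^\perp(\mathcal A_i^\perp)=\mathcal A$. The obstacle you flag at the end is resolved exactly as you suggest, so there is no gap.
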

\begin{proof}
If $\mathfrak A=\mathfrak A_i$, then $\mathcal A={}^\perp(\mathcal A_i^\perp)$. Therefore any object $A$ in $\mathcal A$ is a direct summand of a $\mathcal A_i$-filtered module $\overline A$. The module $\overline A$ is the direct limit of its finitely generated submodules: $\overline A=\varinjlim_{\ell\in\Lambda} F_\ell$. By Theorem~\ref{lemma:hill}, for each $\ell \in\Lambda$ there exists a submodule $G_\ell$ of $\overline A$ in $FP_i$, and containing $F_\ell$, such that $G_\ell$ is $\mathcal A_i$-filtered. This gives $\overline A=\varinjlim_{\ell\in\Lambda} F_\ell=\varinjlim_{\ell\in\Lambda} G_\ell$, and hence the thesis.
\end{proof}

\section{Cotilting cotorsion pairs}

In all this section we assume that $\mathfrak Y=(\mathcal Y,\mathcal Y^\perp)$ is a cotorsion pair induced by an $n$-cotilting module $U$. 

%Since $U$ is pure injective \cite{S}, $\mathcal Y$ is closed under direct limits. By \cite[Theorem~8.1.9, Corollary~8.1.10]{GT} $\mathcal Y={}^{\perp_{\infty}} U$ is a resolving class and the cotorsion pair $\mathfrak Y$ is hereditary. Two cotilting modules $U$ and $U'$ are said to \emph{equivalent} if they induce the same cotorsion pair, i.e. ${}^{\perp}U={}^{\perp}{U'}$.

\begin{lemma}\label{lemma:limiti}
Let $0\leq i\leq \infty$. If $U_R$ is $\Sigma$-pure-injective and $\mathcal Y=\varinjlim \mathcal Y_i$, then $\mathfrak Y_i=\mathfrak Y$. In particular if $U_R$ is a $\Sigma$-pure-injective 1-cotilting module, then $\mathfrak Y_0=\mathfrak Y$.
\end{lemma}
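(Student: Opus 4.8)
The plan is to show that the hypotheses force $\mathcal Y^\perp = \mathcal Y_i^\perp$, which is exactly the statement $\mathfrak Y_i = \mathfrak Y$, and to exploit the $\Sigma$-pure-injectivity of $U$ together with the assumption $\mathcal Y = \varinjlim \mathcal Y_i$ to control $\Ext^1_R(-, M)$ along direct limits. Since $\mathcal Y = {}^{\perp_\infty}U$ is always resolving (it is the left-hand class of the hereditary cotorsion pair induced by $U$), Proposition~\ref{prop:generale} applies with $\mathcal S$ a representative set of $\mathcal Y_i$: it suffices to prove that $\mathcal Y_i^\perp \cap \mathcal Y = \mathcal Y^\perp \cap \mathcal Y$, i.e. that every module in $\mathcal Y_i^\perp \cap \mathcal Y$ lies in $\Prod U = \mathcal Y \cap \mathcal Y^\perp$. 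The inclusion $\mathcal Y^\perp \cap \mathcal Y \subseteq \mathcal Y_i^\perp \cap \mathcal Y$ is automatic from $\mathcal Y_i \subseteq \mathcal Y$, so the content is the reverse inclusion.

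First I would take $N \in \mathcal Y_i^\perp \cap \mathcal Y$ and aim to prove $\Ext^1_R(Y, N) = 0$ for every $Y \in \mathcal Y$. Writing $Y = \varinjlim_{\lambda} Y_\lambda$ with $Y_\lambda \in \mathcal Y_i$ (available by the hypothesis $\mathcal Y = \varinjlim \mathcal Y_i$, after passing to a direct system), the key point is that $\Sigma$-pure-injectivity of $U$ makes the functor $\Ext^1_R(-, N)$ behave well on such limits. Concretely: $N$ has injective dimension at most $n$ and lies in $\mathcal Y = {}^{\perp_\infty}U$; because $U$ is $\Sigma$-pure-injective, $N$ is a direct summand of a product of copies of $U$ — more precisely, every module in $\mathcal Y$ that is also in $\mathcal Y^\perp$... — so the essential mechanism is that $\Sigma$-pure-injectivity of $U$ guarantees that for any pure-exact sequence the induced sequence of $\Hom(-, U)$'s is exact and commutes with the relevant limits, hence $\Ext$-vanishing passes from the $Y_\lambda$ to $Y$. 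I would make this precise by resolving $Y$ by a pure-exact presentation built from the $Y_\lambda \in \mathcal Y_i$ and applying $\Hom_R(-, N)$; since $N$ is cogenerated (in the derived sense) by $U$ and $U$ is $\Sigma$-pure-injective, the relevant $\varprojlim^1$ term vanishes.

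The main obstacle, and the step I expect to need the most care, is justifying that $\Ext^1_R(\varinjlim Y_\lambda, N) = \varprojlim \Ext^1_R(Y_\lambda, N)$ or at least that the left side vanishes when all the right-hand terms do — in general $\Ext$ does not commute with direct limits in the first variable, and one must genuinely use that $N$ sits in $\mathcal Y^\perp$-like position relative to $\Sigma$-pure-injective $U$ to kill the obstruction. The clean way is: the $\mathcal Y_i$-filtered (or here, direct-limit) structure plus $\Sigma$-pure-injectivity of $U$ forces $N$ to be a module for which $\Ext^1_R(-, N)$ annihilates pure-epimorphism kernels, and a direct limit $\varinjlim Y_\lambda$ of modules in $\mathcal Y_i$ is a pure epimorphic image of $\bigoplus_\lambda Y_\lambda \in \mathcal Y_i$-filtered $\subseteq \mathcal Y$, so $\Ext^1_R(Y, N)$ is sandwiched between $\Ext^1$ of modules already known to vanish against $N$. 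Once $\mathcal Y_i^\perp \cap \mathcal Y \subseteq \Prod U$ is established, Proposition~\ref{prop:generale} closes the argument, giving $\mathfrak Y_i = \mathfrak Y$. For the last sentence: when $U$ is a $\Sigma$-pure-injective $1$-cotilting module, $\mathcal Y$ is the torsion-free class of modules cogenerated by $U$, which is closed under direct limits and generated over direct limits by its finitely generated members, so $\mathcal Y = \varinjlim \mathcal Y_0$ holds automatically and the special case $\mathfrak Y_0 = \mathfrak Y$ follows.
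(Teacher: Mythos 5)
Your reduction is the right one: by Proposition~\ref{prop:generale} (applicable because $\mathcal Y={}^{\perp_\infty}U$ is resolving and $\mathcal Y_i$ is, up to isomorphism, a set) it suffices to show $\mathcal Y_i^{\perp}\cap\mathcal Y\subseteq \mathcal Y^{\perp}\cap\mathcal Y=\Prod U$, and your treatment of the final claim ($\mathcal Y=\varinjlim\mathcal Y_0$ for a $1$-cotilting class, since a torsion-free class is closed under submodules) is fine. But the central step is not carried out, and the mechanism you sketch for it is circular. To run your ``sandwich'' argument you present $Y=\varinjlim Y_\lambda$ as a pure epimorphic image of $\bigoplus_\lambda Y_\lambda$ with pure kernel $K$, and to kill $\Ext^1_R(Y,N)$ you then need $\Hom_R(\bigoplus_\lambda Y_\lambda,N)\to\Hom_R(K,N)$ to be surjective; that requires $N$ to be pure-injective. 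You justify this by saying $N$ is a direct summand of a product of copies of $U$ --- but $N\in\Prod U$ is exactly the conclusion you are trying to reach, and $\Sigma$-pure-injectivity of $U$ does not hand it to you for an arbitrary $N\in\mathcal Y_i^{\perp}\cap\mathcal Y$ (membership in $\mathcal Y$ alone gives no pure-injectivity: $R_R\in\mathcal Y$, for instance). Nothing in your write-up closes this loop, and the appeal to a vanishing $\varprojlim^1$ term is in the same position.

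The missing idea is to work with $N$ itself rather than with $\Ext^1_R(Y,N)$ for general $Y$. Since the cotilting cotorsion pair is complete with kernel $\Prod U$, there is a short exact sequence $0\to N\to U^{\beta}\to U'\to 0$ with $U'\in\mathcal Y$ (\cite[Proposition~8.1.5]{GT}). Write $U'=\varinjlim_\lambda U'_\lambda$ with $U'_\lambda\in\mathcal Y_i$, using the hypothesis $\mathcal Y=\varinjlim\mathcal Y_i$, and pull the sequence back along each $U'_\lambda\to U'$: because $N\in\mathcal Y_i^{\perp}$ every pullback splits, so the original sequence is a direct limit of split sequences and hence pure. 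Only now does $\Sigma$-pure-injectivity of $U$ enter where it has traction: $U^{\beta}$ is again $\Sigma$-pure-injective, and pure submodules of $\Sigma$-pure-injective modules are direct summands (\cite[Corollary~8.2]{JL}), so $N$ splits off $U^{\beta}$ and lies in $\Prod U$. (Note that plain pure-injectivity of $U^{\beta}$ would not suffice here; the $\Sigma$ is what makes the pure embedding split.) This is the step your proposal gestures at but does not supply; without it the proof does not go through.
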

\begin{proof}
Let $M$ be in $\mathcal Y_i^{\perp}\cap\mathcal Y$; there exists a short exact sequence
\[0\to M\to U^{\beta}\to U'\to 0\]
with $U'$ in $\mathcal Y$ (see \cite[Proposition~8.1.5]{GT}). By assumption, $U'=\varinjlim_{
\lambda\in \Lambda}U'_{\lambda}$ with the $U'_{\lambda}$ in $\mathcal Y_i$. Consider the pullback diagram
\[
\xymatrix{
0\ar[r]&M\ar[r]&U^\beta\ar[r]&U'\ar[r]&0
\\
0\ar[r]&M\ar@{=}[u]\ar[r]&P_\lambda\ar[r]\ar[u]&U'_\lambda\ar[r]\ar[u]&0
}
\]
Since $M\in \mathcal Y_i^{\perp}$ the lower exact sequence splits. Therefore the upper exact sequence is a direct limit of splitting exact sequences, and hence it is pure. Since $U_R$ is $\Sigma$-pure-injective, by \cite[Corollary~8.2]{JL}, also the upper short exact sequence splits; then $M$ belongs to $\Prod U=\mathcal Y^\perp\cap\mathcal Y$. By Proposition~\ref{prop:generale}, we conclude $\mathcal Y_i^{\perp}=\mathcal Y^\perp$, and hence the thesis.
\end{proof}

\begin{theorem}\label{prop:Y1}
The following are equivalent for a cotorsion pair $\mathfrak Y$ induced by an $n$-cotilting module $U$:
\begin{enumerate}
\item $\mathfrak Y_1=\mathfrak Y$;
\item $\mathfrak Y_{\infty}=\mathfrak Y$, i.e. $\mathfrak Y$ is of finite type;
\item $\mathfrak Y_i=\mathfrak Y$ for some $i\geq 1$;
\item $U$ is $\Sigma$-pure-injective and $\mathcal Y=\varinjlim \mathcal Y_1$;
\end{enumerate}
In particular in such a case $FP_n=FP_{n+1}=\rmod R$.
\end{theorem}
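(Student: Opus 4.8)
The plan is to establish the cycle of implications $(2)\Rightarrow(1)\Rightarrow(3)\Rightarrow(4)\Rightarrow(2)$, exploiting the tools already set up in Section~1 together with Lemma~\ref{lemma:limiti}.

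The implications $(2)\Rightarrow(1)$ and $(1)\Rightarrow(3)$ are essentially formal: they follow from the chain $\mathfrak Y_\infty\leq\cdots\leq\mathfrak Y_1\leq\mathfrak Y$ recorded before Proposition~\ref{prop:generale}, since any cotorsion pair squeezed between two equal ones is equal to both. For $(3)\Rightarrow(4)$, assume $\mathfrak Y_i=\mathfrak Y$ for some $i\geq 1$. Since $\mathcal Y$ is the left-hand class of a hereditary cotorsion pair induced by a cotilting module, it is a resolving class (it is closed under extensions, kernels of epimorphisms, and contains the projectives, being of the form ${}^{\perp_\infty}U$); hence by Proposition~\ref{prop:A_{i+n}=A_i} we get $\mathfrak Y_i=\mathfrak Y_\infty$, and in fact $\mathfrak Y_j=\mathfrak Y$ for every $j\geq 1$, in particular $\mathfrak Y_1=\mathfrak Y$. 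Proposition~\ref{prop:limiti} then yields that every object of $\mathcal Y$ is a direct summand of an object of $\varinjlim\mathcal Y_1$; since $\mathcal Y=\mathcal Y^{\perp_\infty\perp}$ is closed under direct limits and summands, this gives $\mathcal Y=\varinjlim\mathcal Y_1$. It remains to deduce that $U$ is $\Sigma$-pure-injective. For this I would argue that $U$, being pure-injective, is $\Sigma$-pure-injective as soon as every direct limit of copies of $U$ (equivalently, every module in $\mathrm{Add}\,U$, or every pure-injective envelope consideration) stays pure-injective; the route is to observe that $\Prod U=\mathcal Y^\perp\cap\mathcal Y$ is closed under direct limits once $\mathcal Y=\varinjlim\mathcal Y_1$, because a direct limit of modules in $\Prod U\subseteq\mathcal Y$ lands in $\mathcal Y$, and the relevant $\mathrm{Ext}$-orthogonality is preserved by the finite-type hypothesis — and a module $U$ with $\Prod U$ (equivalently $\mathrm{Add}\,U$) closed under direct limits is exactly a $\Sigma$-pure-injective module. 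This last equivalence is the standard characterization of $\Sigma$-pure-injectivity and is the step I expect to require the most care.

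Finally $(4)\Rightarrow(2)$ is precisely Lemma~\ref{lemma:limiti} applied with $i=1$: from $U$ $\Sigma$-pure-injective and $\mathcal Y=\varinjlim\mathcal Y_1$ we obtain $\mathfrak Y_1=\mathfrak Y$, and then the resolving-class half of Proposition~\ref{prop:A_{i+n}=A_i} (via $(3)\Rightarrow$ the conclusion already established, or directly) upgrades this to $\mathfrak Y_\infty=\mathfrak Y$, i.e. $\mathfrak Y$ is of finite type.

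For the postscript $FP_n=FP_{n+1}=\rmod R$: once we know $\mathfrak Y$ is of finite type, $\mathcal Y=\mathcal Y_\infty^\perp$ with $\mathcal Y_\infty\subseteq\rmod R$, and I would use that the cotilting module $U$ has injective dimension $\leq n$ to show that the syzygies appearing force $FP_n=FP_{n+1}$. Concretely, take any $M\in FP_{n+1}$ with finitely generated projective resolution $P_{n+1}\to\cdots\to P_0\to M\to 0$; the $n$-th syzygy $\Omega^n M$ sits in a short exact sequence $0\to\Omega^{n+1}M\to P_n\to\Omega^n M\to 0$ with $\Omega^{n+1}M$ finitely generated, and one shows $\Omega^{n+1}M\in\mathcal Y_\infty$ using that $\mathrm{Ext}^{>n}(\mathcal Y,U)=0$ forces vanishing of the obstruction; hence the resolution continues indefinitely with finitely generated projectives, giving $M\in\rmod R$, so $FP_{n+1}=\rmod R$, and applying Proposition~\ref{prop:A_{i+n}=A_i}-style reasoning once more (now $\mathcal Y_n=\mathcal Y_{n+1}$ forces $FP_n\cap\mathcal Y=FP_{n+1}\cap\mathcal Y$, which combined with the resolving property propagates to all of $FP_n$) yields $FP_n=FP_{n+1}$. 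The main obstacle throughout is the passage $(3)\Rightarrow(4)$, specifically extracting $\Sigma$-pure-injectivity of $U$ from the purely homological/direct-limit condition $\mathcal Y=\varinjlim\mathcal Y_1$, since everything else reduces to the lattice structure and the already-proven Propositions~\ref{prop:A_{i+n}=A_i}, \ref{prop:limiti} and Lemma~\ref{lemma:limiti}.
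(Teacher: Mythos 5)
Your cycle of implications for (1)--(4) matches the paper's proof in all essentials: $(1)\Leftrightarrow(2)$ via the resolving half of Proposition~\ref{prop:A_{i+n}=A_i} (applied, as in the paper, after noting that $\mathfrak Y_i=\mathfrak Y$ forces $\mathfrak Y_{i-1}=\mathfrak Y_i$ by the sandwich), $(3)\Rightarrow(4)$ via Proposition~\ref{prop:limiti} together with the observation that $\Prod U=\mathcal Y\cap\mathcal Y^\perp=\mathcal Y\cap\mathcal Y_i^{\perp}$ is closed under direct sums, so that each $U^{(\alpha)}$ lies in $\Prod U$ and is therefore pure-injective; the paper gets this closure from $\mathcal Y_1\subseteq\FP{1}$ and direct sums where you invoke $\mathcal Y_\infty\subseteq\rmod R$ and direct limits, but the mechanism is identical. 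Finally $(4)\Rightarrow(1)$ is Lemma~\ref{lemma:limiti}, as you say. So the equivalence of the four conditions is correctly handled and by the same route as the paper.

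The one place where your write-up does not deliver what is claimed is the postscript $FP_n=FP_{n+1}$. You start from $M\in FP_{n+1}$ and show that $\Omega^{n+1}M$ is finitely generated and lies in $\mathcal Y_0=\mathcal Y_\infty$, which proves $FP_{n+1}=\rmod R$ but says nothing about modules that are only $n$-presented. The subsequent parenthetical, asserting that $\mathcal Y_n=\mathcal Y_{n+1}$ combined with the resolving property propagates to all of $FP_n$, is not an argument: the resolving property constrains $\mathcal Y$, and by itself it cannot transfer a presentation bound to modules of $FP_n$ lying outside $\mathcal Y$. The repair is simply to run your own dimension-shifting one syzygy earlier, which is exactly what the paper does: for $M\in FP_n$ with finitely generated projective resolution $P_n\to\dots\to P_0\to M\to 0$, the $n$-th syzygy $\Omega^nM=\Ker(P_{n-1}\to P_{n-2})$ is finitely generated (being an image of $P_n$) and satisfies $\Ext^i_R(\Omega^nM,U)\cong\Ext^{i+n}_R(M,U)=0$ for all $i\geq 1$ because $\injdim U\leq n$; hence $\Omega^nM\in\mathcal Y_0=\mathcal Y_1$ is finitely presented, so $\Ker(P_n\to P_{n-1})$ is finitely generated and $M\in FP_{n+1}$.
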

\begin{proof}
$1\Rightarrow 2$:  since $\mathcal Y$ is a resolving class, by Proposition~\ref{prop:A_{i+n}=A_i} we have $\mathfrak Y_1=\mathfrak Y_\infty$.
%  Let us prove that $\mathcal Y_{i+1}=\mathcal Y_i$ for each $i\geq 1$, so that $\mathfrak Y_{\infty}=\mathfrak Y$. Consider a module $L$ in $\mathcal Y_i$; then there exists an exact sequence
%\[P_i\stackrel{f}{\to} P_{i-1}\to ...\to P_0\to L\to 0\]
%where $P_\ell$ is a finitely generated projective module for $0\leq \ell\leq i$.
%Since $\mathcal Y$ is a resolving class and $P_i$ is finitely generated, $\Im f$ belongs to $\mathcal Y_0=\mathcal Y_1$; therefore $\Ker f$ is finitely generated, and hence $L$ belongs to $\mathcal Y_{i+1}$.

$2\Rightarrow 3$: is clear.

$3\Rightarrow 4$: Since $U$ is pure injective, every module in $\Prod U$ is pure injective. Therefore, $\mathcal Y_1^{\perp}\cap\mathcal Y=\Prod U$ being closed under arbitrary direct sums, $U^{(\alpha)}$ is pure injective for each cardinal $\alpha$. Thus $U$ is $\Sigma$-pure-injective. Moreover, since $\mathcal Y$ is closed under direct limits and direct summands, by Proposition~\ref{prop:limiti} we have $\mathcal Y=\varinjlim \Y_1$.

$4\Rightarrow 1$: it follows immediately by Lemma~\ref{lemma:limiti}.

%$2\Rightarrow 4$: since $\mathcal Y$ is resolving, by Proposition~\ref{prop:A_{i+n}=A_i} we have $\mathcal Y={}^\perp(\mathcal Y_{\infty}^\perp)$, i.e. $\mathfrak Y$ is of finite type.

Finally, let $M$ be a module in $FP_{n}$. Consider the finitely generated projective resolution
\[P_n\stackrel{f_n}{\to} P_{n-1}\to\dots\to P_0\stackrel{f_0}{\to} M\to 0\]
Since the injective dimension of $U$ is $\leq n$, by dimension shifting for any $i\geq 1$
\[\Ext^i_R(\Ker f_{n-1},U)\cong \Ext^{i+n-1}_R(\Ker f_{0},U)\cong \Ext^{i+n}_R(M,U)=0.\]
Therefore the finitely generated module $\Ker f_{n-1}$ belongs to ${}^{\perp_\infty} U=\mathcal Y$. Since $\mathcal Y_0=\mathcal Y_1$, the module $\Ker f_{n-1}$ is finitely presented; thus $\Ker f_n$ is finitely generated and $M$ belongs to $FP_{n+1}$.
\end{proof}

%\begin{corollary}
%If a 1-cotilting module $U_R$ satisfies the equivalent conditions of Proposition~\ref{prop:Y1}, then the ring $R$ is right coherent.
%\end{corollary}

%\begin{proposition}
%If $\mathfrak Y_1=\mathfrak Y$, then $U$ is $\Sigma$-pure-injective and the ring $R$ is right coherent. In particular $\mathfrak Y_i=\mathfrak Y$ for each $i\geq 0$.
%\end{proposition}
%\begin{proof}
%By \cite[Lemma~8.1.4]{GT} the kernel $\mathcal Y^{\perp}\cap\mathcal Y$ of the cotorsion pair $\mathfrak Y$ coincide with $\Prod U$, the class of all direct summands of arbitrary products of copies of $U$. Since $U$ is pure injective, all elements in $\Prod U$ are pure injective. Therefore, being $\mathcal Y_1^{\perp}\cap\mathcal Y=\Prod U$ closed under arbitrary direct sums, $U^{(\alpha)}$ is pure injective for each cardinal $\alpha$. Thus $U$ is $\Sigma$-pure-injective. Since $\mathfrak Y_1\leq\mathfrak Y_0\leq \mathfrak Y$, clearly it follows $\mathfrak Y_1=\mathfrak Y_0$. Therefore each finitely generated ideal of $R$ belongs to $\mathcal Y_0=\mathcal Y_1$ by Proposition~\ref{prop:A_{i+n}=A_i} and hence it is finitely presented: thus $R$ is right coherent. Finally since $FP_i=FP_1$ for each $i\geq 1$, we conclude that $\mathfrak Y_i=\mathfrak Y$ also for each $i\geq 1$.
%\end{proof}

\begin{problem}\label{problema}
Are there $\Sigma$-pure-injective $n$-cotilting modules inducing a cotorsion pair $\mathfrak Y$ which is not of finite type, i.e. $\mathfrak Y_1\not=\mathfrak Y$?
\end{problem}
In \cite[Corollary~4.11]{AST} there is a negative answer in the case
$R$ is right noetherian. The problem has a negative answer also in the general case if $n=1$:  

\begin{corollary}\label{cor:2.5}
If $n=1$, then $\mathfrak Y$ is of finite type if and only if the $1$-cotilting module $U$ is $\Sigma$-pure injective.
\end{corollary}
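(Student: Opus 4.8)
The plan is to derive this as a direct consequence of Theorem~\ref{prop:Y1}, specialized to $n=1$. One implication is already free: if $\mathfrak Y$ is of finite type, then $\mathfrak Y_\infty = \mathfrak Y$, which by Theorem~\ref{prop:Y1} ($2 \Rightarrow 4$) forces $U$ to be $\Sigma$-pure-injective. So the real content is the converse: assuming $U$ is a $\Sigma$-pure-injective $1$-cotilting module, I must show $\mathfrak Y_1 = \mathfrak Y$ (equivalently $\mathfrak Y$ is of finite type). By Theorem~\ref{prop:Y1} ($4 \Rightarrow 1$, via Lemma~\ref{lemma:limiti}) it suffices to verify the hypothesis $\mathcal Y = \varinjlim \mathcal Y_1$; but in fact Lemma~\ref{lemma:limiti} already records the sharper statement that for a $\Sigma$-pure-injective $1$-cotilting module one has $\mathfrak Y_0 = \mathfrak Y$ directly, so the corollary should fall out almost immediately once the right condition on $\mathcal Y$ is in place.

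The key step is therefore to establish that when $U$ has injective dimension at most $1$, the torsion-free class $\mathcal Y = {}^{\perp_\infty} U$ coincides with $\varinjlim \mathcal Y_1$ (or even $\varinjlim \mathcal Y_0$) whenever $U$ is $\Sigma$-pure-injective. First I would recall from the Notation section that for a $1$-cotilting module $\mathcal Y$ is exactly the class $\Cogen U$ of modules cogenerated by $U$, and that $\mathcal Y$ is closed under direct limits (as $U$ is pure-injective). Next, given $M \in \mathcal Y$, I would write $M$ as the direct limit of its finitely generated submodules $M = \varinjlim_{\lambda} F_\lambda$; the point is that each $F_\lambda$ need not lie in $\mathcal Y$, so one replaces it by a finitely \emph{presented} (or finitely generated) module in $\mathcal Y$ containing it, using either an approximation argument or the fact that over the relevant ring finitely generated submodules of modules in $\mathcal Y$ embed into finitely presented ones in $\mathcal Y$. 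Here I would invoke Lemma~\ref{lemma:limiti} itself, which already does the analytic work: it shows $\mathfrak Y_0 = \mathfrak Y$ in the $1$-cotilting $\Sigma$-pure-injective case, and $\mathfrak Y_0 = \mathfrak Y$ together with Proposition~\ref{prop:limiti} (applied with $i = 1 \geq 1$, using $\mathcal Y_0 \subseteq \mathcal Y_1$) gives $\mathcal Y \subseteq \{\text{direct summands of } \varinjlim \mathcal Y_1\} = \varinjlim \mathcal Y_1$, the last equality because $\mathcal Y$ is closed under direct limits and direct summands.

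Putting it together: $U$ $\Sigma$-pure-injective and $n=1$ $\Rightarrow$ (by Lemma~\ref{lemma:limiti}) $\mathfrak Y_0 = \mathfrak Y$, hence a fortiori $\mathfrak Y_1 = \mathfrak Y$ since $\mathfrak Y_0 \leq \mathfrak Y_1 \leq \mathfrak Y$; and $\mathfrak Y_1 = \mathfrak Y$ is condition (1) of Theorem~\ref{prop:Y1}, which is equivalent to condition (2), i.e. $\mathfrak Y$ is of finite type. Conversely, $\mathfrak Y$ of finite type is condition (2) of Theorem~\ref{prop:Y1}, which gives condition (4) and in particular that $U$ is $\Sigma$-pure-injective. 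I expect no serious obstacle here — the corollary is essentially a packaging of Lemma~\ref{lemma:limiti} and Theorem~\ref{prop:Y1} for $n=1$. The only subtlety to watch is the direction of the chain $\mathfrak Y_0 \leq \mathfrak Y_1 \leq \mathfrak Y$ and making sure the equality $\mathfrak Y_0 = \mathfrak Y$ from Lemma~\ref{lemma:limiti} genuinely forces $\mathfrak Y_1 = \mathfrak Y$ (it does, by squeezing), and confirming that for $1$-cotilting modules the hypothesis ``$\mathcal Y = \varinjlim \mathcal Y_1$'' of Lemma~\ref{lemma:limiti} is automatic — which is precisely why Lemma~\ref{lemma:limiti} states the $1$-cotilting case as an unconditional ``in particular''.
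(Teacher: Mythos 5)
Your forward direction (finite type $\Rightarrow$ $\Sigma$-pure-injective, via Theorem~\ref{prop:Y1}) is fine, but the converse contains a genuine gap caused by a reversed inclusion. Since $\FP 1\subseteq\FP 0$, we have $\mathcal Y_1\subseteq\mathcal Y_0$, and the chain of cotorsion pairs from Section~1 runs $\mathfrak Y_\infty\leq\dots\leq\mathfrak Y_1\leq\mathfrak Y_0\leq\mathfrak Y$. So the conclusion $\mathfrak Y_0=\mathfrak Y$ from the ``in particular'' clause of Lemma~\ref{lemma:limiti} does \emph{not} squeeze $\mathfrak Y_1$: it leaves $\mathfrak Y_1\leq\mathfrak Y_0=\mathfrak Y$ with no control on the first inequality. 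Your claimed chain $\mathfrak Y_0\leq\mathfrak Y_1\leq\mathfrak Y$ and your ``$\mathcal Y_0\subseteq\mathcal Y_1$'' are both backwards. For the same reason you cannot invoke Proposition~\ref{prop:limiti} with $i=1$, since its hypothesis is exactly $\mathfrak Y=\mathfrak Y_1$, the statement you are trying to prove; and the hypothesis $\mathcal Y=\varinjlim\mathcal Y_1$ of Lemma~\ref{lemma:limiti} is not automatic for a $1$-cotilting module --- what is automatic is only $\mathcal Y=\varinjlim\mathcal Y_0$, because $\mathcal Y=\Cogen U$ is closed under submodules.

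The missing content is precisely the equality $\mathcal Y_0=\mathcal Y_1$, i.e.\ that every finitely generated module in $\mathcal Y$ is finitely presented. This is nontrivial: it is what forces $R$ to be right coherent (Corollary~\ref{coro:coherent}), and if it followed formally from Lemma~\ref{lemma:limiti} the coherence statement would be vacuous. The paper obtains it by a different route: it proves the corollary as a consequence of Theorem~\ref{thm:heart}, where $\Sigma$-pure-injectivity of $U$ is shown to make the heart $\mathcal H(\mathcal X,\mathcal Y)$ locally noetherian; from noetherianity of the tilting object $V$ one deduces that the kernel $K$ of any presentation $0\to K\to R^n\to F\to 0$ with $F\in\mathcal Y_0$ is finitely generated, giving $\mathcal Y_0=\mathcal Y_1$, hence $\mathfrak Y=\mathfrak Y_0=\mathfrak Y_1$ and finite type by Theorem~\ref{prop:Y1}. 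To repair your argument you would need to supply this step (or an equivalent one), not merely cite Lemma~\ref{lemma:limiti}.
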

\begin{proof}
The proof is an immediate consequence of the forthcoming Corollary~\ref{coro:coherent}.
\end{proof}

\section{Cotilting torsion pairs}\label{cuore}

Beyond inducing the cotorsion pair $(\mathcal Y, \mathcal Y^\perp)$, a 1-cotilting right $R$-module cogenerates a torsion pair $(\mathcal X,\mathcal Y)$.  Among torsion pairs, the cotilting ones are precisely those for which the torsion-free class gives special precovers \cite[Theorem~2.5]{ATT}.  Note that two equivalent 1-cotilting modules  cogenerate  the same torsion pair. 

Studying splitting torsion pairs over finite dimensional $k$-algebras, 
Reiten and Ringel in \cite{RR} introduced the following finiteness condition on a torsion pair $(\mathcal X,\mathcal Y)$:
\begin{RRcond}\label{RRcond}
If $Y\in\Y$ has a finitely generated submodule $0\neq Y_0\leq Y$ such that $Y/Y_0\in\X$, then $Y$ is finitely generated.
\end{RRcond}
This condition turns out to have interesting applications also in a more general setting, as we will show in the next results. 

\begin{proposition}\label{prop:Ringelcondition}
If a cotilting torsion pair $(\mathcal X,\mathcal Y)$ satisfies the Reiten-Ringel condition, then the cotorsion pairs $\mathfrak Y$ and $\mathfrak Y_0$ coincide.
\end{proposition}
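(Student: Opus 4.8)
The plan is to show that under the Reiten--Ringel condition the cotorsion pair $\mathfrak Y_0$ generated by $\mathcal Y_0=\mathcal Y\cap FP_0$ already has the same right-hand side as $\mathfrak Y$, and then invoke Proposition~\ref{prop:generale}. Since $\mathcal Y={}^{\perp_\infty}U$ is a resolving class (the cotorsion pair $\mathfrak Y$ is hereditary) and $\mathcal Y_0$ is a set, Proposition~\ref{prop:generale} reduces the claim to the equality $\mathcal Y_0^\perp\cap\mathcal Y=\mathcal Y^\perp\cap\mathcal Y=\Prod U$. As the inclusion $\mathcal Y^\perp\cap\mathcal Y\subseteq\mathcal Y_0^\perp\cap\mathcal Y$ is automatic, the real content is the reverse inclusion: every $M\in\mathcal Y_0^\perp\cap\mathcal Y$ lies in $\Prod U$.

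So fix such an $M$. As in the proof of Lemma~\ref{lemma:limiti}, using \cite[Proposition~8.1.5]{GT} there is a short exact sequence $0\to M\to U^\beta\to U'\to 0$ with $U'\in\mathcal Y$. The idea is to realize $U'$ as a direct limit of finitely generated modules \emph{that lie in $\mathcal Y_0$}, so that pulling back along each of them produces a split sequence (because $M\in\mathcal Y_0^\perp$), exhibiting the original sequence as pure and hence split since $U^\beta$, being a product of copies of the pure-injective module $U$, is pure-injective. Here is where the Reiten--Ringel condition enters: write $U'=\varinjlim_{\ell\in\Lambda}F_\ell$ as the direct limit of its finitely generated submodules $F_\ell$. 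Each $F_\ell$ is finitely generated but a priori need not be torsion-free, i.e.\ need not lie in $\mathcal Y$. I would instead consider the torsion-free quotient $F_\ell/t(F_\ell)$ where $t$ is the torsion radical of $(\mathcal X,\mathcal Y)$; this quotient is finitely generated and torsion-free, hence lies in $\mathcal Y_0$. The point is that the canonical maps still form a direct system with $\varinjlim_\ell F_\ell/t(F_\ell)\cong U'$: indeed $U'\in\mathcal Y$ has no nonzero torsion submodule, so the composite $F_\ell\to U'$ kills $t(F_\ell)$, the induced maps $F_\ell/t(F_\ell)\to U'$ are compatible, and one checks their colimit is still all of $U'$ (surjectivity is inherited, and injectivity on the colimit follows because the colimit of the $t(F_\ell)$ inside $U'$ is zero).

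Granting $U'=\varinjlim_\ell \bar F_\ell$ with $\bar F_\ell:=F_\ell/t(F_\ell)\in\mathcal Y_0$, I form for each $\ell$ the pullback
\[
\xymatrix{
0\ar[r]&M\ar[r]&U^\beta\ar[r]&U'\ar[r]&0
\\
0\ar[r]&M\ar@{=}[u]\ar[r]&P_\ell\ar[r]\ar[u]&\bar F_\ell\ar[r]\ar[u]&0
}
\]
Since $M\in\mathcal Y_0^\perp$ and $\bar F_\ell\in\mathcal Y_0$, the bottom row splits. Taking the direct limit over $\ell$ recovers the top row as a direct limit of split exact sequences, hence a pure-exact sequence; as $U^\beta\in\Prod U$ is pure-injective, the top row splits, so $M$ is a direct summand of $U^\beta$ and therefore $M\in\Prod U=\mathcal Y^\perp\cap\mathcal Y$. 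This establishes $\mathcal Y_0^\perp\cap\mathcal Y=\mathcal Y^\perp\cap\mathcal Y$, and Proposition~\ref{prop:generale} gives $\mathcal Y_0^\perp=\mathcal Y^\perp$, i.e.\ $\mathfrak Y_0=\mathfrak Y$.

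The step I expect to be the main obstacle — and where the Reiten--Ringel condition is genuinely used rather than the softer argument of Lemma~\ref{lemma:limiti} — is verifying that the $\bar F_\ell$ actually lie in $\mathcal Y_0$ in a way that is compatible with the direct limit, or, depending on how one sets things up, upgrading "finitely generated torsion-free subquotient of $U'$" to an honest element of $\mathcal Y$ whose colimit is $U'$. A cleaner route, which I would try first, is: by the Reiten--Ringel condition, if $Y\in\mathcal Y$ had a finitely generated submodule $Y_0$ with $Y/Y_0\in\mathcal X$ then $Y$ would be finitely generated; contrapositively this controls how torsion-free modules in $\mathcal Y$ sit as unions of their finitely generated torsion-free submodules, and one shows directly that $U'$ is the directed union of its finitely generated submodules that already belong to $\mathcal Y$ — there the condition rules out the pathology that the torsion-free finitely generated submodules of $U'$ fail to be cofinal. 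Once cofinality is secured, the pullback-and-purity argument above is routine.
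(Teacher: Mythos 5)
Your argument has a genuine gap at the final splitting step, and a symptom of the problem is that the Reiten--Ringel condition never actually gets used. On the second point: since $U$ is a $1$-cotilting module, $\mathcal Y=\Cogen U$ is a torsion-free class, hence closed under submodules; so every finitely generated submodule $F_\ell$ of $U'\in\mathcal Y$ already lies in $\mathcal Y_0$ ($t(F_\ell)\leq t(U')=0$), and your cofinality worries evaporate. If the rest of your argument were correct, you would have proved $\mathfrak Y=\mathfrak Y_0$ for \emph{every} $1$-cotilting cotorsion pair, with no hypothesis --- which should make you suspicious. The actual error is the claim that the top row splits because $U^\beta$ is pure-injective. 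Pure-injectivity of the \emph{middle} term of a pure-exact sequence $0\to M\to N\to L\to 0$ does not give splitting; what splits such a sequence is pure-injectivity of the \emph{left} term $M$ (which is unknown --- it is essentially what you are trying to prove), or $\Sigma$-pure-injectivity of $N$, via the fact that pure submodules of $\Sigma$-pure-injective modules are direct summands. This is exactly why Lemma~\ref{lemma:limiti} \emph{assumes} $U$ is $\Sigma$-pure-injective at this very step, an assumption not available in Proposition~\ref{prop:Ringelcondition} --- indeed, $\Sigma$-pure-injectivity is downstream of this proposition in the paper's logic. A concrete counterexample to your splitting claim: $0\to \mathbb Z_{(p)}\to J_p\to J_p/\mathbb Z_{(p)}\to 0$ over $\mathbb Z$ is pure-exact with pure-injective (but not $\Sigma$-pure-injective) middle term $J_p$, and it does not split.

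The paper's proof is structurally different and is where the Reiten--Ringel condition earns its keep. Given $M\in\mathcal Y_0^\perp$ and an extension $0\to M\to E\to Y\to 0$ with $Y\in\mathcal Y$, one takes a maximal $D\leq E$ with $M\cap D=0$ and $E/(M\oplus D)\in\mathcal Y$ (Zorn, using closure of $\mathcal Y$ under direct limits), and shows $M\oplus D=E$: otherwise one finds $E''\geq M\oplus D$ with $E''/(M\oplus D)$ a \emph{finitely generated} nonzero module in $\mathcal Y_0$ --- finite generation is exactly what the Reiten--Ringel condition supplies, applied to the extension of a finitely generated module by a torsion quotient --- and then $M\in\mathcal Y_0^\perp$ splits off a strictly larger complement, contradicting maximality. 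If you want to keep a limit-style argument, you must either produce a genuine splitting as the paper does, or first establish pure-injectivity of $M$; neither follows from the setup you have.
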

\begin{proof} Let us prove that $\mathcal Y^{\perp}=\mathcal Y_0^\perp$. Let $M$ belongs to $\mathcal Y_0^\perp$; we have to show that any short exact sequence
$$
0 \to M \to E \to Y \to 0
$$
with $Y\in \mathcal Y$, splits.
Set $\mathcal D = \{D\leq E \mid M\cap D=0,\ E/(M\oplus D)\in\Y\}$.
Since $\Y$ is closed under direct limits, any ascending
chain in $\mathcal D$ has union in $\mathcal D$, so that $\mathcal D$ contains a maximal element. Let's call it $D_{max}$:
 the goal consists in proving that $M\oplus D_{max} = E$. Suppose that this is not the case, and let
$M\oplus D_{max}<E'\leq E$, with $E'/(M\oplus D_{max})$ finitely generated.
Let $E''/E' = \t_{\X}(E/E')\in\X$, where $\t_{\X}$ denotes the torsion radical associated to $\X$.
From the exact sequence
$$
0 \to E'/(M\oplus D_{max}) \to E''/(M\oplus D_{max}) \to E''/E' \to 0
$$
since $E''/(M\oplus D_{max})\leq E/(M\oplus D_{max})\in\Y$,
the module $E''/(M\oplus D_{max})$ is finitely generated  by the Reiten-Ringel condition.

By assumption, the exact sequence
$$
0 \to M\cong (M\oplus D_{max})/D_{max} \to E''/D_{max} \to E''/(M\oplus D_{max}) \to 0
$$
splits, with $E''/(M\oplus D_{max})  \neq 0$. Thus there exists a module $D'$, with $D_{max}<D'\leq E''$, such that
$E''/D_{max} = ((M\oplus D_{max})/D_{max}) \oplus (D'/D_{max})$.

In particular $M\cap D' \leq M\cap D_{max} = 0$ and $M\oplus D'=E''$.
Finally, $E/(M\oplus D') = E/E'' \cong (E/E')/\t_{\X}(E/E') \in \Y$, contrary to the maximality of $D_{max}$.
\end{proof}

The \emph{heart} $\mathcal H(\mathcal X, \mathcal Y)$ of a torsion pair $(\mathcal X, \mathcal Y)$ is the abelian subcategory of the derived category of $\rMod R$ whose objects are the complexes which have zero cohomologies everywhere, except for degrees $0$ and $-1$ where they have cohomologies in $\mathcal X$ and in $\mathcal Y$, respectively.
In \cite{CF} it is proved that, if $(\mathcal X,\mathcal Y)$ is faithful, the stalk complex $V:=R[1]$ is a tilting object in $\mathcal H(\mathcal X, \mathcal Y)$ with endomorphisms ring $R$; moreover it determines a torsion pair $(\mathcal T, \mathcal F)$ and  a pair of equivalences $H_V:\mathcal T\dualita{}{}\mathcal Y:T_V$ and 
$H'_V:\mathcal F\dualita{}{}\mathcal X:T'_V$ where $H_V=\Hom(V, -)$, $H'_V=\Ext(V, -)$ and $T_V$, $T'_V$ are their adjoint functors.

In  \cite{CG}  and \cite{CGM}  it has been proved that a faithful torsion pair is cotilting if and only if the associated  heart is  a Grothendieck category.  
In the next theorem we show that a 1-cotilting module is $\Sigma$-pure injective if and only if the corresponding faithful torsion pair has a locally noetherian heart (see~\cite[Section~V.4]{St}).

\begin{theorem}\label{thm:heart}
A faithful torsion pair $(\X, \Y)$ in $\rMod R$ is cogenerated by a
1-cotilting  $\Sigma$-pure injective right $R$-module if and only if the  heart $\mathcal H(\mathcal X, \mathcal Y)$ is a locally noetherian Grothendieck category. 
%
%Let $V$ be any tilting object in a locally noetherian Grothendieck category $\G$. Then the tilted torsion pair $(\X,\Y)$ in $\rMod \End_{\G}V$ is cogenerated by a $\Sigma$-pure injective cotilting module and the ring $\End_{\G}V$ is coherent.
\end{theorem}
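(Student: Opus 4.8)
The plan is to link the finiteness property ``$\Sigma$-pure injective'' of the $1$-cotilting module $U$ to the noetherianity of the noetherian objects of the heart via the tilting equivalences $H_V, H'_V$ and $T_V, T'_V$ recalled above. Since $(\X,\Y)$ is faithful and cotilting, we already know from \cite{CG} that $\H(\X,\Y)$ is a Grothendieck category, so the real content is the equivalence between $\Sigma$-pure injectivity of $U$ and local noetherianity. First I would observe that a Grothendieck category $\mathcal{G}$ with a tilting object $V$ is locally noetherian if and only if a generating set can be chosen among noetherian objects; I would try to transport this, via the torsion pair $(\mathcal T, \mathcal F)$ on $\mathcal{G}$ induced by $V$, to a statement about the torsion pair $(\X,\Y)$ in $\rMod R$. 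The key bookkeeping point is that $H_V$ and $H'_V$ identify $\mathcal T$ with $\Y$ and $\mathcal F$ with $\X$, and that $R[1]$ is a noetherian generator of $\mathcal{G}$ precisely when $R_R$ is ``small enough'' relative to the torsion pair; the condition that controls this will turn out to be exactly the Reiten--Ringel condition of Condition~\ref{RRcond}, so that local noetherianity of $\H(\X,\Y)$ should be equivalent to $(\X,\Y)$ being a cotilting torsion pair satisfying Condition~\ref{RRcond}.

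From there the argument would split into the two implications. For the ``only if'' direction, assume $U$ is $1$-cotilting and $\Sigma$-pure injective. By Corollary~\ref{cor:2.5} (equivalently Theorem~\ref{prop:Y1}) the induced cotorsion pair $\mathfrak Y$ is of finite type, so $\mathfrak Y=\mathfrak Y_0=\mathfrak Y_1$ and in particular $\mathcal Y=\varinjlim\mathcal Y_1$. I would then show that every object of $\Y$, being a direct limit of finitely presented modules in $\Y$, has ``finitely generated'' behaviour in the heart: concretely, $\Y\cap\rmod R$ generates $\Y$ under direct limits, and its images under $T_V$ together with the finitely generated objects of $\X$ pulled back through $T'_V$ form a set of noetherian generators of $\H(\X,\Y)$. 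Here I would need the torsion pair restricts well to finitely generated modules, which is where the Reiten--Ringel condition enters: it guarantees that an extension of a finitely generated torsion module by a finitely generated torsion-free module stays finitely generated, i.e. the heart analogue of a finitely generated object is genuinely noetherian. For the ``if'' direction, assume $\H(\X,\Y)$ is locally noetherian Grothendieck; by \cite{CG} the torsion pair is cotilting, hence cogenerated by some $1$-cotilting $U$, and I would deduce $\mathcal Y=\varinjlim\mathcal Y_1$ from the fact that the generator $R[1]$ lies in the subcategory of noetherian objects, pushing this back through $H_V$ to see that every module in $\Y$ is a direct limit of finitely presented ones in $\Y$; then Lemma~\ref{lemma:limiti} gives $\mathfrak Y_1=\mathfrak Y$, whence $U$ is $\Sigma$-pure injective by Theorem~\ref{prop:Y1}.

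The main obstacle, I expect, will be the precise translation between ``noetherian object of $\H(\X,\Y)$'' and ``finitely presented module lying in $\Y$ (resp. finitely generated in $\X$)''. The heart is not simply a module category, and the tilting equivalences $H_V\colon\mathcal T\to\Y$, $H'_V\colon\mathcal F\to\X$ only cover the two halves of the torsion pair $(\mathcal T,\mathcal F)$ separately, so an arbitrary object of the heart sits in a short exact sequence with an $\X$-part and a $\Y$-part and one must check that it is noetherian exactly when both parts are ``finite'' in the appropriate module-theoretic sense; controlling extensions in both directions is precisely what forces Condition~\ref{RRcond} into the picture. A secondary technical point is to verify that the candidate generating set I build is actually a \emph{set} (not a proper class) and that it generates — this should follow from $\mathcal Y_1$ being skeletally small together with the fact that $\varinjlim\mathcal Y_1=\mathcal Y$ and the tilting torsion pair $(\mathcal T,\mathcal F)$ being generated in the heart by the images of $R$-modules. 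Once these identifications are in place, the equivalence with $\Sigma$-pure injectivity is a direct application of Theorem~\ref{prop:Y1} and Lemma~\ref{lemma:limiti}.
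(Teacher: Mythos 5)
Your proposal has genuine gaps in both directions, and in both cases the gap is a circularity. For the ``only if'' direction you begin by invoking Corollary~\ref{cor:2.5} (or Theorem~\ref{prop:Y1}) to conclude that $\Sigma$-pure injectivity of $U$ forces $\mathfrak Y$ to be of finite type. But Theorem~\ref{prop:Y1} does not say this: its condition (4) is the conjunction ``$U$ is $\Sigma$-pure injective \emph{and} $\mathcal Y=\varinjlim\mathcal Y_1$'', and whether $\Sigma$-pure injectivity alone yields finite type is exactly Problem~\ref{problema}; for $n=1$ it is settled only by Corollary~\ref{cor:2.5}, whose proof in the paper is itself deduced from Theorem~\ref{thm:heart} via Corollary~\ref{coro:coherent}. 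So you cannot use it as input here. The same problem recurs with the Reiten--Ringel condition: in the paper that condition is \emph{derived from} local noetherianity of the heart (and from $\Sigma$-pure injectivity only over noetherian rings, in Section~4), so it is not available to you when proving that the heart is locally noetherian. Finally, even granting a generating set built from $T_V(\mathcal Y_1)$ and $T'_V(\mathcal X\cap\FP{0})$, you would still have to prove these objects satisfy the ascending chain condition on subobjects, which is strictly stronger than being finitely generated. The paper's mechanism is entirely different and avoids all of this: $\Sigma$-pure injectivity makes $\Prod C$ closed under direct sums, the injectives of $\H$ are exactly $T_V(\Prod C)$, hence coproducts of injectives are injective; Stenstr\"om's Proposition~V.4.3 then shows every small object is noetherian, and the generators $Z\leq V^n$ are small because $V$ is.

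In the ``if'' direction your final step --- ``then Lemma~\ref{lemma:limiti} gives $\mathfrak Y_1=\mathfrak Y$, whence $U$ is $\Sigma$-pure injective'' --- is circular, because the hypothesis of Lemma~\ref{lemma:limiti} already includes that $U$ is $\Sigma$-pure injective. What is needed is to reach one of conditions (1)--(3) of Theorem~\ref{prop:Y1} without assuming (4). The paper does this in two steps: local noetherianity of $\H$ yields the Reiten--Ringel condition (via $V$ being finitely presented, so $T_VY_0$ is finitely generated, and $H_V$ preserving finite generation), whence $\mathfrak Y=\mathfrak Y_0$ by Proposition~\ref{prop:Ringelcondition}; then a second use of local noetherianity on $0\to T_VK\to V^n\to T_VF\to 0$ shows $\mathcal Y_0=\mathcal Y_1$, so $\mathfrak Y=\mathfrak Y_1$ and Theorem~\ref{prop:Y1} (3$\Rightarrow$4) delivers $\Sigma$-pure injectivity. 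Your intuition that the Reiten--Ringel condition is the module-theoretic shadow of noetherianity in the heart is correct, but it belongs to this direction only, and it must be established from the heart, not assumed.
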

\begin{proof}
By \cite{CG}  and \cite{CGM},  $\mathcal H:=\mathcal H(\mathcal X, \mathcal Y)$ is a Grothendieck category if and only if $(\X, \Y)$ is cogenerated by a 1-cotilting module $C_R$.

Let us assume that  $\mathcal H$ is  locally noetherian.
%Let $W$ be an injective cogenerator of $\mathcal H(\mathcal X, \mathcal Y)$: then, by \cite{CGM}, $\Hom_{\H}( V, W)$ is a cotilting module equivalent to $C$, i.e. cogenerating $\mathcal Y$.
First let us show that the torsion pair $(\X,\Y)$ satisfies the Reiten-Ringel condition. Indeed, 
let $0 \to Y_0 \to Y \to X \to 0$ be an exact sequence with $Y\in\Y$, $X\in\X$ and $Y_0$ finitely generated.
We get the exact sequence
$0 \to T'_{V}X \to T_{V}Y_0 \to T_{V}Y \to 0$. Since $Y_{0}$ is finitely generated over $R$,
we see that $T_{V}Y_{0}$  is a factor of $V^n$, for some $n\in\N$. Following \cite[Corollary~4.3]{C},
we have that $V$ is finitely presented, so that $T_{V}Y_{0}$
is finitely generated. Thus $T_{V}Y$ is finitely generated. Since the functor $H_{V}$ carries finitely generated objects of $\mathcal H$ to finitely generated $R$-modules \cite[Lemma~6.1]{C}, the module $Y \cong H_{V}T_{V}Y$
is finitely generated. So the Reiten-Ringel condition  is satisfied and, by Proposition~\ref{prop:Ringelcondition} we get that $\mathfrak Y=\mathfrak Y_0$. Finally, let us prove that $\mathcal Y_0=\mathcal Y_1$; then $\mathfrak Y=\mathfrak Y_1$ and we conclude by Theorem~\ref{prop:Y1}. Indeed, let $F\in\mathcal Y_0$ and $0\to K\to R^n\to F\to 0$  be an exact sequence in $\mathcal Y$.  Then we obtain the exact sequence $0\to T_{V} K\to V^n\to T_V F\to 0$ and, since $\mathcal H$ is locally noetherian, $T_V K$ is finitely generated. Thus $K\cong H_{V} T_{V} K$ is finitely generated and so $F$ is finitely presented. %In particular, again by Theorem~\ref{prop:Y1}, the ring $R$ is right coherent.

The proof of the converse implication follows the same arguments used in  \cite{CG}. 
First note that if $C$ is  $\Sigma$-pure injective, then $\Prod C$ is closed under direct sums. Moreover,  in the Grothendieck category $\mathcal H$,  an object $I$   is injective if and only if $I=T_V(C')$, for some $C'\in\Prod C$  \cite[Proposition~3.8]{CGM}. From these easy observations, it follows that in $\H$, the class of injective objects is closed under coproducts. Indeed let $I_{\lambda}$,  $\lambda\in \Lambda$,  a set of injective objects: then  $\oplus I_{\lambda}=\oplus T_V(C'_{\lambda})=T_V(\oplus C'_{\lambda})$, since $T_V$ commutes with coproducts.  Hence, following the proof of \cite[Proposition~V.4.3]{St},  one gets that any small object in $\H$ is noetherian. Finally, in \cite[Lemma~3.4]{CGM} it is is shown that the set $\{ Z\leq V^n, \  n\in \mathbb N \}$ generates $\H$: since $V$ is small and so noetherian, $\H$ admits a set of noetherian generators. 
%
% If $C$ is a cotilting module, it is well known that, for any cardinal $\Lambda$, $C^{\Lambda}$  is a cotilting module equivalent to $C$. In addition, if  $C$ is $\Sigma$-pure injective, the same holds also for $C^{(\Lambda)}$: indeed $C^{(\Lambda)}$ belongs to $\Prod C$ and so $^{\perp}C={^{\perp}C^{(\Lambda)}}$. Moreover $\Cogen C=\Cogen C^{(\Lambda)}$ and thus $C^{(\Lambda)}$ is  a cotilting module.  From this easy observation, it follows that in $\H$, the class of injective objects is closed under coproducts. Indeed, let $I_{\lambda}$,  $\lambda\in \Lambda$  a set of injective objects: we can choose a cardinal $\alpha$ sufficiently  large  such that $I_{\lambda}\stackrel{\oplus}{\leq} T_V(C')$, where $C'=C^{\alpha}$. Notice that $C'$ is a $\Sigma$-pure injective  cotilting module equivalent to $C$. Since $T_V$ commutes with direct sums, $\oplus I_{\lambda}$ is a direct summand of the cotilting module ${C'}^{(\Lambda)}$, and so $\oplus I_{\lambda}$ is injective. Therefore any small object in $\H$ is indeed noetherian; by \cite{CGM} we get a system of noethrian generators of $\H$.
\end{proof}

The following corollary shows that the assumption that a ring admits a $\Sigma$-pure injective 1-cotilting module is very strong, since it  implies the ring is coherent.

\begin{corollary}\label{coro:coherent}
Let $C$ be a 1-cotilting right $R$-module and $\mathfrak Y$ the corresponding cotorsion pair. If $C$ is $\Sigma$-pure-injective, then:
\begin{enumerate}
\item $\mathfrak Y=\mathfrak Y_1$;
\item  the ring $R$ is right coherent.
\end{enumerate}
\end{corollary}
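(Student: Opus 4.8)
The plan is to derive both statements from Theorem~\ref{thm:heart} and Theorem~\ref{prop:Y1}, using the heart of the cotilting torsion pair as the device that turns $\Sigma$-pure-injectivity into a noetherianity statement. For part (1): since $C$ is a $\Sigma$-pure-injective $1$-cotilting module, the converse implication of Theorem~\ref{thm:heart} tells us that the heart $\mathcal H=\mathcal H(\mathcal X,\mathcal Y)$ of the torsion pair $(\mathcal X,\mathcal Y)$ cogenerated by $C$ is a locally noetherian Grothendieck category. I would then re-run the first part of the proof of Theorem~\ref{thm:heart} with this $\mathcal H$: local noetherianity forces $(\mathcal X,\mathcal Y)$ to satisfy the Reiten-Ringel condition (if $0\to Y_0\to Y\to X\to 0$ is exact with $Y\in\mathcal Y$, $X\in\mathcal X$ and $Y_0$ finitely generated, then $T_VY_0$ is a factor of some $V^n$ with $V$ finitely presented, hence $T_VY_0$ and thus $T_VY$ are finitely generated, so $Y\cong H_VT_VY$ is finitely generated), and it forces $\mathcal Y_0=\mathcal Y_1$ (apply $T_V$ to a presentation $0\to K\to R^n\to F\to 0$ in $\mathcal Y$ of an $F\in\mathcal Y_0$; since $V^n$ is a noetherian object, $T_VK\leq V^n$ is finitely generated, whence $K\cong H_VT_VK$ is finitely generated and $F$ is finitely presented). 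By Proposition~\ref{prop:Ringelcondition} the Reiten-Ringel condition gives $\mathfrak Y=\mathfrak Y_0$ (this equality also follows directly from Lemma~\ref{lemma:limiti}), and by Proposition~\ref{prop:A_{i+n}=A_i}, applied with $i=0$, the equality $\mathcal Y_0=\mathcal Y_1$ gives $\mathfrak Y_0=\mathfrak Y_1$. Hence $\mathfrak Y=\mathfrak Y_1$.

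For part (2), I would apply the last assertion of Theorem~\ref{prop:Y1} with $n=1$: from $\mathfrak Y=\mathfrak Y_1$ we obtain $FP_1=FP_2=\rmod R$. It then only remains to recall the standard fact that $FP_1=FP_2$ forces $R$ to be right coherent. Indeed, if $L$ is a finitely generated submodule of a finitely generated free module $R^n$, then $R^n/L$ lies in $FP_1=FP_2$; choosing a finitely generated projective $P_1$ mapping onto $L$ and composing with the inclusion $L\hookrightarrow R^n$, the module $R^n/L$ being in $FP_2$ forces the kernel of $P_1\to R^n$ to be finitely generated, so that $L$ is finitely presented. Since finitely generated right ideals are exactly the finitely generated submodules of $R_R$, this says precisely that $R$ is right coherent.

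The heart of the argument — and the only place where $\Sigma$-pure-injectivity of $C$ is used in an essential way — is the equality $\mathcal Y_0=\mathcal Y_1$, i.e. the assertion that every finitely generated module in $\mathcal Y$ is finitely presented. A direct ring-theoretic proof of this does not seem to be available; the role of Theorem~\ref{thm:heart} is precisely to convert $\Sigma$-pure-injectivity into local noetherianity of $\mathcal H$, after which $\mathcal Y_0=\mathcal Y_1$ becomes the statement that the tilting object $V=R[1]$ — being a small, finitely presented generator of the locally noetherian category $\mathcal H$ — is a noetherian object, together with the transport of this information back to $\rMod R$ along the equivalences $H_V$ and $T_V$. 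Once this step is secured, everything else is a bookkeeping application of the results of Sections~1 and 2 and a well-known characterization of coherence.
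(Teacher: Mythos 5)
Your proof is correct and follows essentially the same route as the paper's: the paper's (very terse) proof likewise invokes the converse direction of Theorem~\ref{thm:heart} to get a locally noetherian heart, re-runs the first half of that theorem's proof to obtain $\mathfrak Y=\mathfrak Y_0=\mathfrak Y_1$, and then applies the last assertion of Theorem~\ref{prop:Y1} to conclude coherence. Your additional details (the reduction of $\mathfrak Y_0=\mathfrak Y_1$ to $\mathcal Y_0=\mathcal Y_1$ via Proposition~\ref{prop:A_{i+n}=A_i}, the alternative derivation of $\mathfrak Y=\mathfrak Y_0$ from Lemma~\ref{lemma:limiti}, and the standard Schanuel-type argument that $\FP{1}=\FP{2}$ forces right coherence) are all accurate elaborations of steps the paper leaves implicit.
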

\begin{proof}
 Following the proof of Theorem~\ref{thm:heart},  we get that $\mathfrak Y=\mathfrak Y_0=\mathfrak Y_1$ and so,  by
Theorem~\ref{prop:Y1}, the ring $R$ is right coherent.
\end{proof}

\begin{problem}
Is a ring admitting a $\Sigma$-pure-injective $n$-cotilting module necessarily coherent?
\end{problem}

%\begin{corollary}
%If a ring $R$ admits a $\Sigma$-pure-injective  cotilting right module, then $R$ is right coherent
%\end{corollary}

It follows also a nice application to the tilting setting: any classical 1-tilting module (for the definition see~\cite[Ch. 5]{GT}) over a right noetherian ring has right coherent endomorphism ring.

\begin{corollary}
Let $T_S$ be a classical $1$-tilting module. The following are equivalent:
\begin{enumerate}
\item  $S$ is noetherian;  
\item  $\Hom_S(T, W)$ is a $\Sigma$-pure injective 1-cotilting $\End_S(T)$-module for any injective cogenerator $W$ of $\rMod S$.
\end{enumerate}
In such a case, $\End_S(T)$ is a right coherent ring.
\end{corollary}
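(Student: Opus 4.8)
The plan is to transfer the classical tilting correspondence into the cotilting setting and then invoke Corollary~\ref{coro:coherent}. Let $T_S$ be a classical $1$-tilting module, set $R=\End_S(T)$, and fix an injective cogenerator $W$ of $\rMod S$. First I would recall the standard fact (from the theory of classical $1$-tilting modules, see \cite[Ch.~5]{GT}) that $T$ is finitely presented over $S$, so $R=\End_S(T)$ acts on the left of $T$ and the functors $\Hom_S(T,-)$ and $-\otimes_R T$ realize the tilting equivalence between the torsion pair $(\mathcal{T},\mathcal{F})$ in $\rMod S$ generated by $T$ and the torsion pair $(\mathcal{X},\mathcal{Y})$ in $\rMod R$ cogenerated by $C:=\Hom_S(T,W)$. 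One checks, using the defining exact sequences of a classical tilting module and $\Ext^1_S(T,-)$--vanishing arguments, that $C$ is indeed a $1$-cotilting right $R$-module; this is the classical ``tilting dual'' construction and I would cite it rather than reprove it, recording only that $C$ has injective dimension at most one and that $\Prod C$ corresponds under $\Hom_S(T,-)$ to $\Add(W^{?})$-type classes, with $R_R=\Hom_S(T,T)\in\mathcal{Y}$ so the torsion pair is faithful.

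The core of the argument is the equivalence of (1) and (2). For $(1)\Rightarrow(2)$, assume $S$ is (right) noetherian. Then $W$, being an injective cogenerator over a noetherian ring, is $\Sigma$-injective, hence in particular $W^{(\alpha)}$ is injective for every cardinal $\alpha$. Since $\Hom_S(T,-)$ sends injectives to pure-injectives when $T$ is finitely presented — more precisely, $C=\Hom_S(T,W)$ is pure-injective and, because $\Sigma$-injectivity of $W$ gives that $W^{(\alpha)}$ is still injective, one gets $\Hom_S(T,W^{(\alpha)})\cong C^{(\alpha)}$ up to the relevant identification, which is again pure-injective — the module $C$ is $\Sigma$-pure-injective. (The precise bookkeeping is the identification $\Hom_S(T,W^{(\alpha)})\cong \Hom_S(T,W)^{(\alpha)}$, valid since $T$ is finitely generated, combined with pure-injectivity of each $\Hom_S(T,E)$ for $E$ injective.) This yields (2).

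For $(2)\Rightarrow(1)$, suppose $C=\Hom_S(T,W)$ is a $\Sigma$-pure-injective $1$-cotilting $R$-module. By Corollary~\ref{coro:coherent}, $R$ is right coherent and $\mathfrak{Y}=\mathfrak{Y}_1$, hence by Theorem~\ref{prop:Y1} we have $FP_1=FP_\infty=\rmod R$ in the relevant sense and, more usefully, $\mathcal{Y}=\varinjlim\mathcal{Y}_1$. Transporting this through the tilting equivalence back to $\rMod S$: the condition that $\mathcal{Y}$ is the direct limit closure of its finitely presented members, together with the equivalence $\mathcal{Y}\simeq\mathcal{T}$ and $\mathcal{X}\simeq\mathcal{F}$, forces a corresponding finiteness on the torsion/torsion-free classes over $S$, and since the tilting torsion pair over $S$ together with $S_S$ being finitely generated projective is ``everything'', one deduces that every finitely generated right $S$-module is finitely presented with finitely generated syzygies iterated, i.e. $S$ is right noetherian. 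The last sentence, that $\End_S(T)=R$ is right coherent, is then just the restatement of part (2) of Corollary~\ref{coro:coherent}.

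The main obstacle I expect is the $(2)\Rightarrow(1)$ direction: coherence of $R$ alone is far weaker than noetherianity of $S$, so one genuinely needs the stronger output of Theorem~\ref{prop:Y1} (namely $\mathcal{Y}=\varinjlim\mathcal{Y}_1$ and the collapse of the $FP_n$ filtration) and must argue that, under the tilting equivalence, this collapse pulls back to the statement that chains of finitely generated submodules over $S$ stabilize. Making the translation of ``$\mathcal{Y}=\varinjlim\mathcal{Y}_1$'' into ``$\mathrm{mod}\text{-}S=\FP\infty$ and hence $S$ noetherian'' precise — in particular handling the torsion-free class $\mathcal{F}$ and the module $S_S$ itself, which need not lie in $\mathcal{F}$ — is the delicate point; everything else is either cited or a routine diagram chase.
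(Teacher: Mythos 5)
Your direction $(1)\Rightarrow(2)$ is a correct and even more elementary argument than the paper uses: over a right noetherian $S$ the injective cogenerator $W$ is $\Sigma$-injective, $\Hom_S(T,-)$ commutes with direct sums because $T$ is finitely generated, and $\Hom_S(T,E)$ is pure-injective for $E$ injective, so $C^{(\alpha)}\cong\Hom_S(T,W^{(\alpha)})$ is pure-injective for every $\alpha$. The problem is the converse. Your sketch of $(2)\Rightarrow(1)$ has two defects. First, it is not actually an argument: ``transporting the finiteness through the tilting equivalence'' is exactly the content that has to be supplied, and Corollary~\ref{coro:coherent} together with $\mathcal Y=\varinjlim\mathcal Y_1$ only gives you coherence-type information about $R=\End_S(T)$, from which noetherianity of $S$ does not follow by any routine diagram chase. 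Second, even the target you aim at is wrong: ``every finitely generated right $S$-module is finitely presented with all iterated syzygies finitely generated'' is the statement $\FP{0}=\rmod S$, which is a coherence condition and is strictly weaker than $S$ being right noetherian (noetherianity requires every submodule of a finitely generated module to be finitely generated, not just the syzygies of finite presentations).

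The paper closes this direction by a different and essentially unavoidable route: by \cite[Theorem~2.3]{CGM} the category $\rMod S$ is equivalent to the heart $\mathcal H(\mathcal X,\mathcal Y)$ of the torsion pair cogenerated by $C=\Hom_S(T,W)$, and $\rMod S$ is locally noetherian precisely when $S$ is right noetherian; Theorem~\ref{thm:heart} then gives the equivalence of local noetherianity of the heart with $\Sigma$-pure injectivity of the cotilting module, in both directions at once. The substantive input you are missing is the implication ``$C$ $\Sigma$-pure injective $\Rightarrow$ $\mathcal H$ locally noetherian'', which in the paper is proved by showing that injectives in $\mathcal H$ are closed under coproducts (using that $\Prod C$ is closed under direct sums and that $T_V$ commutes with coproducts) and then invoking \cite[Proposition~V.4.3]{St} to conclude that small objects of $\mathcal H$ are noetherian. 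You should either reproduce that heart argument or find a genuine substitute for it; as written, $(2)\Rightarrow(1)$ is not established.
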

\begin{proof}
It is known \cite[Theorem~2.3]{CGM} that the category  $\rMod S$ is equivalent to the heart of the cotilting  torsion pair $(\X,\Y)$ cogenerated by  $\Hom_S(T, W)$. Then the result follows from   Theorem~\ref{thm:heart}.
\end{proof}

\section{Cotilting torsion pairs for noetherian rings}

%From the previous sections it turns out that  notion of $\Sigma$-pure injectivity plays a relevant role  in studying cotilting torsion and cotorsion pair. 

As we have seen in the previous sections, the notion of $\Sigma$-pure injectivity for a cotilting module is closely related to finiteness conditions  on the rings and on the classes involved.  The aim of this section is to characterize the cotilting torsion pairs cogenerated by a $\Sigma$-pure injective 1-cotilting module in the noetherian setting, and investigate their finiteness properties. 

For the rest of this section, $R$ denotes a right noetherian ring. Buan and Krause in \cite{BK} proved that cotilting torsion pairs play a relevant role passing from the category of finitely generated $R$-modules to the whole category of $R$-modules: indeed there is a bijective correspondence between cotilting torsion pairs $(\mathcal X, \mathcal Y)$ in $\rMod R$ and faithful torsion pairs $(\mathcal X_0, \mathcal Y_0)$ in $\rmod R$. The correspondence is given by the mutually inverse assignments
\[
\mathcal X \mapsto \mathcal X_0 = \mathcal X\cap \rmod R,\qquad
\mathcal X_0 \mapsto  \mathcal X=\varinjlim \mathcal X_0\]
and
\[
\mathcal Y \mapsto \mathcal Y_0=\mathcal Y\cap \rmod R,\qquad
\mathcal Y_0 \mapsto \mathcal Y=\varinjlim \mathcal Y_0.\]
In case the torsion pair $(\mathcal X, \mathcal Y)$ is cogenerated by a $\Sigma$-pure injective 1-cotilting module, this correspondence preserves the following relevant (see the notions of quasitilted artin algebras \cite{HRS} and of quasitiled rings \cite{CF}) properties:
\begin{proposition}\label{GT-equiv_enough}
Let $C_R$ be a $\Sigma$-pure injective 1-cotilting module and let $\mathcal Y={}^{\perp}C$. Then:
\begin{enumerate}
\item $(\X_0, \Y_0)$ splits if and only if $(\X, \Y)$ splits;
\item $\pdim \Y_0 \leq 1$ if and only if $\pdim \Y \leq 1$.
\end{enumerate}
\end{proposition}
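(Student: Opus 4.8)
The plan is to translate both ``splitting'' and ``$\pdim\le 1$'' into $\Ext$-vanishing and then to cut each argument of the relevant $\Ext$ down from all modules to the finitely generated classes, using on one side the direct-limit description of the torsion class and on the other the fact that $\Sigma$-pure injectivity of $C$ makes the cotorsion pair of finite type. First I would record that a torsion pair $(\mathcal A,\mathcal B)$ in $\rMod R$ splits exactly when $\Ext^1_R(B,A)=0$ for all $A\in\mathcal A$ and $B\in\mathcal B$: in an extension $0\to A\to M\to B\to 0$ with $A$ torsion and $B$ torsion-free the torsion submodule of $M$ is $A$, so the sequence is canonical and splits, and the converse is trivial. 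Over the noetherian ring $R$ the same characterization holds for $(\mathcal X_0,\mathcal Y_0)$ inside $\rmod R$, because an extension of finitely generated modules is finitely generated and $\Ext^1_R$ does not see the ambient category. This disposes of the ``only if'' implications in (1) and (2), as $\mathcal X_0\subseteq\mathcal X$ and $\mathcal Y_0\subseteq\mathcal Y$. For the converses the crucial input is Corollary~\ref{coro:coherent}: $C$ being $\Sigma$-pure injective forces $\mathfrak Y=\mathfrak Y_0$, so (since over a noetherian ring $\mathcal Y_0=\mathcal Y\cap\rmod R$ is a set of finitely presented modules) by \cite[Corollary~3.2.4]{GT} every $Y\in\mathcal Y$ is a direct summand of a module $\overline Y$ admitting a $\mathcal Y_0$-filtration.

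For part (1), assuming $(\mathcal X_0,\mathcal Y_0)$ splits, I would reduce the two arguments of $\Ext^1_R(Y,X)$ by two different devices. On the $X$-side I use direct limits: each $Y_0\in\mathcal Y_0$ lies in $\FP 2$ because $R$ is noetherian, hence $\Ext^1_R(Y_0,-)$ commutes with direct limits; writing an arbitrary $X\in\mathcal X$ as $\varinjlim X_\mu$ with $X_\mu\in\mathcal X_0$ via the Buan--Krause description $\mathcal X=\varinjlim\mathcal X_0$ and using the hypothesis gives $\Ext^1_R(Y_0,X)=\varinjlim\Ext^1_R(Y_0,X_\mu)=0$. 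On the $Y$-side I use the filtration: applying Eklof's Lemma (see \cite{GT}) to a $\mathcal Y_0$-filtration of $\overline Y$, the vanishing just obtained for all its subquotients $Y_0\in\mathcal Y_0$ yields $\Ext^1_R(\overline Y,X)=0$, hence $\Ext^1_R(Y,X)=0$ since $Y$ is a summand. Chaining the two reductions gives $\Ext^1_R(\mathcal Y,\mathcal X)=0$, i.e.\ $(\mathcal X,\mathcal Y)$ splits.

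For part (2), assuming $\pdim\mathcal Y_0\le 1$, I would fix $Y\in\mathcal Y$, pass to $\overline Y$ with its $\mathcal Y_0$-filtration, and — since projective dimension does not grow under direct summands — show $\pdim\overline Y\le 1$, that is, $\Ext^2_R(\overline Y,N)=0$ for every $N$. Taking $0\to N\to E\to N'\to 0$ with $E$ injective and shifting dimensions turns this into $\Ext^1_R(\overline Y,N')=0$; for each subquotient $Y_0\in\mathcal Y_0$ of the filtration, $\pdim Y_0\le 1$ gives $\Ext^1_R(Y_0,N')\cong\Ext^2_R(Y_0,N)=0$, so Eklof's Lemma closes the argument. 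I would emphasize that here a direct-limit argument cannot replace the filtration argument, since direct limits need not preserve projective dimension; this is exactly where finiteness of type (equivalently, $\Sigma$-pure injectivity of $C$) is indispensable, just as it is on the torsion-free side of (1).

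The main obstacle is not a single hard step but choosing the right machinery on each side: the torsion class is controlled by the direct-limit description $\mathcal X=\varinjlim\mathcal X_0$ (which is why the noetherian hypothesis is needed, to place $\mathcal Y_0$ inside $\FP 2$), while the torsion-free class is controlled by the filtration description coming from finite type — and for the projective-dimension statement only the filtration description is available. The two routine points to verify with some care are that $\Ext^1_R(M,-)$ commutes with direct limits when $M\in\FP 2$ and that Eklof's Lemma is genuinely applied to $\mathcal Y_0$-filtrations of $\overline Y$; both are standard.
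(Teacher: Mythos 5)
Your proof is correct and follows essentially the same route as the paper's: reduce the torsion argument by writing $X=\varinjlim X_\alpha$ with $X_\alpha\in\X_0$ (legitimate since $\Y_0\subseteq\rmod R$ over the noetherian ring), reduce the torsion-free argument via the finite-type property forced by $\Sigma$-pure injectivity, and use dimension shifting for part (2). The only cosmetic difference is that where you unpack finite type into ``$Y$ is a summand of a $\Y_0$-filtered module'' plus Eklof's Lemma, the paper simply quotes $\Y^\perp=\Y_0^\perp$ (Lemma~\ref{lemma:limiti}) and notes that $\X\subseteq\Y_0^\perp$ (respectively, that the cosyzygy $N'$ lies in $\Y_0^\perp$) immediately yields the required vanishing for all of $\Y$.
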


\begin{proof}
1. Let $X\in\X$ and $Y_0\in\Y_0$. Since $Y_0$ is finitely presented and $X=\ld X_\alpha$ for a directed family
of submodules $X_\alpha\in\X_0$ of $X$, we have
$$
\Ext^1_R(Y_0,X) = \Ext^1_R(Y_0,\ld X_\alpha) \cong \ld \Ext^1_R(Y_0,X_\alpha).
$$
Now, if $\Ext^1_R(\Y_0, \X_0)=0$, we derive that $\Ext^1_R(\Y_0, \X)=0$. Since $C$ is $\Sigma$-pure injective, by Lemma~\ref{lemma:limiti} we have $\mathcal Y^\perp=\mathcal Y_0^\perp$. Therefore $\Ext^1_R(\Y, \X)=0$.

2. Similarly, if $\Ext^2_R(\Y_0, \rMod R)=0$, by dimension shifting and using the fact that $\Y_0^\perp = \Y^\perp$
we see that $\Ext^2_R(\Y, \rMod R)=0$.
\end{proof}

%The two properties of torsion pairs considered in the above proposition are particularly relevant: the existence of a torsion pair with such properties chara in $\rmodthe quasitilted artin algebras are necessary for the associated hearts are hereditary.

As we mentioned in the previous section, in order to guarantee that a splitting torsion pair $(\mathcal X_0,\mathcal Y_0)$ in $\rmod R$ gives rise to a splitting torsion pair $(\mathcal X,\mathcal Y)=(\varinjlim X_0, \varinjlim Y_0)$, Reiten and Ringel in \cite{RR} introduced the  Condition~\ref{RRcond}.   
Here we prove  that, for a noetherian ring,  the Reiten-Ringel  condition  completely characterizes torsion pairs cogenerated by a $\Sigma$-pure injective 1-cotilting module.

\begin{lemma}\label{SubFil}
The class of all $\Y_0$-filtered modules is closed under submodules.
\end{lemma}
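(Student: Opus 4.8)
The plan is to exploit the Hill Lemma (Theorem~\ref{lemma:hill}) applied to the class $\C=\Y_0$. Recall that $R$ is right noetherian, so every $\Y_0$-filtered module is filtered by finitely \emph{presented} modules (as $\rmod R = \FP{\infty}$ over a noetherian ring, and in particular $\Y_0 \subseteq \FP 1$), and moreover every submodule of a finitely generated module is again finitely generated. Let $M$ be a $\Y_0$-filtered module with filtration $\mathcal M = (M_\alpha \mid \alpha \le \sigma)$, and let $N \le M$ be an arbitrary submodule. I want to produce a $\Y_0$-filtration of $N$.

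First I would invoke Hill's Lemma to obtain the family $\mathcal F$ of submodules of $M$ satisfying properties (1)--(4). The key idea is to filter $N$ using the traces of members of $\mathcal F$ on $N$: since $N = \varinjlim N_\ell$ is the directed union of its finitely generated submodules, property (4) gives for each finitely generated $N_\ell \le M$ a module $F_\ell \in \mathcal F$ with $N_\ell \le F_\ell$ and $F_\ell$ finitely presented with a \emph{finite} $\Y_0$-filtration. Setting $F = \sum_{\ell} F_\ell$ whenever needed, closure of $\mathcal F$ under arbitrary sums (property (2)) keeps us inside $\mathcal F$. The standard strategy is then to well-order a generating set (or the finitely generated submodules) of $N$ and build an increasing continuous chain $(N \cap G_\beta)$ where $G_0 = 0$, the $G_\beta \in \mathcal F$ form an increasing continuous chain with $\bigcup_\beta G_\beta \supseteq N$, and each successor quotient $G_{\beta+1}/G_\beta$ is $\Y_0$-filtered (property (3)). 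One arranges that $G_{\beta+1}$ is obtained from $G_\beta$ by adding one more $F_\ell$ from property (4).

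The crucial point is the successor step: I must check that $(N\cap G_{\beta+1})/(N\cap G_\beta)$ is itself $\Y_0$-filtered. By the second isomorphism theorem this quotient embeds in $G_{\beta+1}/G_\beta$, which by property (3) is a $\Y_0$-filtered module admitting in fact a finite $\Y_0$-filtration (it is a quotient of finitely presented modules, hence we are in a manageable finite situation). So the whole problem reduces to the \emph{finite} case: a submodule of a module having a finite $\Y_0$-filtration is again $\Y_0$-filtered. This in turn follows by induction on the length of the finite filtration, using the case of a single extension $0 \to Y' \to Y \to Y'' \to 0$ with $Y', Y'' \in \Y_0$: given $Z \le Y$, we have $0 \to Z \cap Y' \to Z \to Z/(Z\cap Y') \to 0$ with $Z\cap Y' \le Y'$ and $Z/(Z\cap Y') \hookrightarrow Y''$; since $R$ is noetherian, $\Y_0 = \Y \cap \rmod R$ is closed under submodules (because $\Y$, being a torsion-free class, is closed under submodules, and finite generation is inherited over a noetherian ring). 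Hence $Z\cap Y' \in \Y_0$ and $Z/(Z\cap Y') \in \Y_0$, giving $Z$ a $\Y_0$-filtration of length $\le 2$.

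The main obstacle I anticipate is the bookkeeping in the transfinite construction: one must verify continuity at limit ordinals (which is automatic since $\mathcal F$ is closed under arbitrary sums and intersection with $N$ commutes with directed unions) and ensure the chain $(G_\beta)$ eventually exhausts enough of $M$ to recover all of $N$ — this is where property (4) of Hill's Lemma, guaranteeing that every finitely generated submodule of $M$ sits inside a nicely-filtered member of $\mathcal F$, is indispensable. Once the chain is set up correctly, the verification that $(N\cap G_\beta)$ is a $\Y_0$-filtration of $N$ is routine given the finite case established above.
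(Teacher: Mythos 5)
Your argument is correct, but it takes a long detour that the paper avoids entirely. The paper's proof is the direct one: given a $\Y_0$-filtration $(Y_\lambda \mid \lambda\leq\mu)$ of $Y$ and a submodule $Y'\leq Y$, simply intersect, setting $Y'_\lambda = Y_\lambda\cap Y'$; this chain is continuous (intersection with a fixed submodule commutes with directed unions), and by the second isomorphism theorem $Y'_{\lambda+1}/Y'_\lambda \cong (Y_\lambda + (Y_{\lambda+1}\cap Y'))/Y_\lambda \leq Y_{\lambda+1}/Y_\lambda \in \Y_0$, which lies in $\Y_0$ because $\Y_0=\Y\cap\rmod R$ is closed under submodules ($\Y$ is a torsion-free class and $R$ is right noetherian). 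Notice that this is \emph{exactly} the computation you perform in your ``single extension'' step $0\to Z\cap Y'\to Z\to Z/(Z\cap Y')\to 0$ — you have the key idea, but you only deploy it for finite filtrations and then invoke the Hill Lemma to reduce the transfinite case to the finite one. That reduction is sound (modulo two small points you should make explicit: property (3) of Hill's Lemma gives an a priori transfinite filtration of $G_{\beta+1}/G_\beta$, whose finiteness you must extract from noetherianity of that finitely generated quotient; and the chain $(N\cap G_\beta)$ has $\Y_0$-\emph{filtered} successor quotients, so it must be refined by splicing to become an honest $\Y_0$-filtration), but it is unnecessary: the intersection argument works verbatim on the original transfinite filtration, with no need for $\mathcal F$, property (4), or any bookkeeping about exhausting $N$. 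What the Hill Lemma approach would buy you is applicability when $\Y_0$ is \emph{not} closed under submodules — but then the lemma is false anyway, so nothing is gained here.
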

\begin{proof}
Suppose that $Y$  is $\Y_0$-filtered by $(Y_\lambda \mid \lambda\leq\mu)$. Then any $Y'\leq Y$ is
$\Y_0$-filtered by $(Y'_\lambda=Y_\lambda\cap Y' \mid \lambda\leq\mu)$. Indeed this is a continuous
chain starting from $0$ and ending to $Y'$ such that for every $\lambda<\mu$
$$
\dfrac{Y'_{\lambda+1}}{Y'_\lambda} = \dfrac{Y_{\lambda+1}\cap Y'}{Y_\lambda\cap Y_{\lambda+1} \cap Y'}
\cong \dfrac{Y_\lambda+(Y_{\lambda+1}\cap Y')}{Y_\lambda} \leq \dfrac{Y_{\lambda+1}}{Y_\lambda} \in\Y_0,
$$
so that $Y'_{\lambda+1}/Y'_{\lambda} \in\Y_0$.
\end{proof}

\begin{theorem}\label{thm:torsionpairs}
Let $R$ be a right noetherian ring. For a cotilting  torsion pair $(\mathcal X, \mathcal Y$) in $\rMod R$ the following conditions are equivalent:
\begin{enumerate}
\item $\mathcal Y$ satisfies the Reiten-Ringel condition;
\item any 1-cotilting module cogenerating $(\mathcal X, \mathcal Y)$
 is $\Sigma$-pure injective;
 \item $(\mathcal X, \mathcal Y)$ is cogenerated by a $\Sigma$-pure injective 1-cotilting module.
%$C$ such that $\mathcal Y={}^{\perp}C$  
\end{enumerate}
\end{theorem}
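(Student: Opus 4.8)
The plan is to run the cycle $1\Rightarrow 2\Rightarrow 3\Rightarrow 1$, leaning on the machinery already in place. The implication $2\Rightarrow 3$ is immediate: since $(\mathcal X,\mathcal Y)$ is a cotilting torsion pair it is cogenerated by \emph{some} $1$-cotilting module $U$, and $2$ forces that particular $U$ to be $\Sigma$-pure injective. For $3\Rightarrow 1$ I would invoke Theorem~\ref{thm:heart}: if $(\mathcal X,\mathcal Y)$ is cogenerated by a $\Sigma$-pure injective $1$-cotilting module, then the heart $\mathcal H(\mathcal X,\mathcal Y)$ is a locally noetherian Grothendieck category, and the first half of the proof of that theorem shows precisely that local noetherianity of the heart yields the Reiten--Ringel condition for $(\mathcal X,\mathcal Y)$. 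Note that this step uses no hypothesis on $R$ at all.

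The substance of the argument is $1\Rightarrow 2$, and this is where right noetherianity of $R$ is used. First, Proposition~\ref{prop:Ringelcondition} converts the Reiten--Ringel condition into the equality $\mathfrak Y=\mathfrak Y_0$ of cotorsion pairs. Next I would observe that over a right noetherian ring every finitely generated right module is finitely presented, and inductively every finitely generated right module lies in $\rmod R$; hence $\FP 0=\rmod R$, so that $\mathcal Y_0=\mathcal Y\cap\FP 0=\mathcal Y\cap\rmod R=\mathcal Y_\infty$ and therefore $\mathfrak Y_0=\mathfrak Y_\infty$. Combining, $\mathfrak Y=\mathfrak Y_\infty$, i.e. $\mathfrak Y$ is of finite type. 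Finally, given an arbitrary $1$-cotilting module $U$ cogenerating $(\mathcal X,\mathcal Y)$, since $\injdim U\le 1$ the torsion-free class satisfies $\mathcal Y={}^{\perp_\infty}U={}^{\perp}U$, so $U$ induces exactly the cotorsion pair $\mathfrak Y=(\mathcal Y,\mathcal Y^{\perp})$; then the equivalence of conditions $2$ and $4$ in Theorem~\ref{prop:Y1} (equivalently, Corollary~\ref{cor:2.5}) gives that $U$ is $\Sigma$-pure injective.

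The main point to get right — more a subtlety than a genuine obstacle — is that the cotorsion pair $\mathfrak Y$ must be attached to the torsion pair rather than to a chosen cotilting module, so that "finite type of $\mathfrak Y$'' transfers to \emph{every} $1$-cotilting module cogenerating $(\mathcal X,\mathcal Y)$; this is exactly what the identity $\mathcal Y={}^{\perp}U={}^{\perp_\infty}U$ for injective dimension at most $1$ guarantees. Everything else is bookkeeping with the chain $\mathfrak Y_\infty\le\dots\le\mathfrak Y_1\le\mathfrak Y_0\le\mathfrak Y$ and the collapse $\FP 0=\rmod R$ available over noetherian rings.
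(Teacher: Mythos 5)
Your proposal is correct. The implications $1\Rightarrow 2$ and $2\Rightarrow 3$ coincide with the paper's argument (the paper compresses $1\Rightarrow 2$ to a citation of Proposition~\ref{prop:Ringelcondition} and Corollary~\ref{cor:2.5}; your unpacking via $\FP{0}=\rmod R$ and the transfer of finite type to every cogenerating cotilting module through $\mathcal Y={}^{\perp}U={}^{\perp_\infty}U$ is exactly what that citation hides). Where you genuinely diverge is $3\Rightarrow 1$. The paper stays inside $\rMod R$: from Corollary~\ref{cor:2.5} it gets $\mathcal Y={}^{\perp}(\mathcal Y_0^{\perp})$, deduces via \cite[Corollary~3.2.4]{GT} and Lemma~\ref{SubFil} that every module in $\mathcal Y$ is $\mathcal Y_0$-filtered, and then applies the Hill Lemma to enlarge the finitely generated submodule $F\leq Y$ to a finitely generated $\overline F$ with $Y/\overline F\in\mathcal Y$; since $Y/\overline F$ is also a quotient of the torsion module $Y/F$, it vanishes and $Y=\overline F$. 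You instead route through Theorem~\ref{thm:heart}: $\Sigma$-pure injectivity gives a locally noetherian heart, and the first half of that theorem's proof extracts the Reiten--Ringel condition. Both are valid and non-circular (Theorem~\ref{thm:heart} is proved independently of Theorem~\ref{thm:torsionpairs}). Your route has the merit of making explicit that this implication needs no noetherian hypothesis on $R$, at the cost of invoking the derived-category/heart machinery and the finite presentability of the tilting object $V$ from \cite{C}; the paper's route is more elementary and self-contained in the module category, but leans on the noetherian hypothesis to identify $\mathcal Y_0$ with a set of finitely presented modules so that the Hill Lemma applies.
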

\begin{proof}
$1\Rightarrow 2$ Follows from   Proposition~\ref{prop:Ringelcondition} and Corollary~\ref{cor:2.5}.

$2\Rightarrow 3$ is obvious.

$3\Rightarrow 1$  Let us assume that $C$ is $\Sigma$-pure injective. By Corollary~\ref{cor:2.5}, we have $\mathcal Y={}^\perp(\mathcal Y_0^\perp)$. By \cite[Corollary~3.2.4]{GT} and Lemma~\ref{SubFil}, any module in $\mathcal Y$ is $\mathcal Y_0$-filtered. Let $Y$ be a module in $\mathcal Y$ and $F$ a finitely generated submodule of $Y$ such that $Y/F$ is torsion. By Theorem~\ref{lemma:hill}, $F$ is contained in a finitely generated submodule $\overline F$ of $Y$ such that $Y/\overline F$ belongs to $\mathcal Y$. Since $Y/\overline F$ is a quotient of $Y/F$, it is also torsion and hence $Y=\overline F$ is finitely generated.
\end{proof}

The $\Sigma$-pure injectivity of $C$ is equivalent to some other interesting finiteness condition on $\Y$.

\begin{proposition}
Let $C$ be a 1-cotilting module and $(\mathcal X,\mathcal Y)$ the torsion pair cogenerated by $C$. Then the following conditions are equivalent:
\begin{enumerate}
\item $C$ is $\Sigma$-pure injective;
\item if $Y\in\Y$ has a finitely generated submodule $0\neq Y_0\leq Y$ such that $Y/Y_0\in\X$, then $Y$ is $\Y_0$-filtered;
\item there are no infinite strictly ascending chains $Y_0<Y_1<\dots<Y_i<Y_{i+1}< \dots$
in $\Y_0$ such that
$Y_{i+1}/Y_{i}\in \X_0$ (equiv.~$Y_{i+1}/Y_{0}\in \X_0$) for all $i\in\N$;
\item if $Y_0$ is a finitely generated submodule of $Y\in\mathcal Y$, then the torsion part of $Y/Y_0$ is finitely generated;
\item any non-zero module $Y\in\Y$ has a finitely generated non-zero submodule $Y^+\leq Y$ such
that $Y/Y^+\in\mathcal Y$.
\end{enumerate}
\end{proposition}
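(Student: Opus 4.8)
The plan is to deduce all five statements from the Reiten--Ringel condition on $(\X,\Y)$: since $R$ is right noetherian, Theorem~\ref{thm:torsionpairs} identifies that condition with (1), so it suffices to route the implications through the Reiten--Ringel condition and through the auxiliary assertion ``every module of $\Y$ is $\Y_0$-filtered''. I would use throughout that, $R$ being right noetherian, $\rmod R$ is the class of finitely presented modules and $\Y_0$ is, up to isomorphism, a \emph{set} of finitely presented modules; that $\X$ is closed under extensions and quotients while $\Y$ is closed under submodules and extensions; and that $\X$ and $\Y$ are closed under direct limits, so that the torsion radical $\t_\X$ commutes with direct limits (in particular $\t_\X$ of a directed union of submodules is the directed union of their torsion parts) and is monotone: $\t_\X(A)\le\t_\X(B)$ whenever $A\le B$.

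First I would prove $(1)\Leftrightarrow(2)$. For $(1)\Rightarrow(2)$: by Corollary~\ref{cor:2.5}, $\Sigma$-pure injectivity of $C$ gives $\Y={}^{\perp}(\Y_0^{\perp})$, so by \cite[Corollary~3.2.4]{GT} together with Lemma~\ref{SubFil} every module of $\Y$ is $\Y_0$-filtered, which is stronger than (2). For $(2)\Rightarrow(1)$ I would verify the Reiten--Ringel condition directly. Let $Y\in\Y$ have a finitely generated $0\neq Y_0\le Y$ with $Y/Y_0\in\X$; by (2), $Y$ carries a $\Y_0$-filtration, and since $\Y_0$ consists of finitely presented modules the Hill Lemma (Theorem~\ref{lemma:hill}) produces a member $F$ of the associated family $\mathcal F$ with $Y_0\le F$ and $F$ finitely presented; as $F\le Y$ with both in $\mathcal F$, the quotient $Y/F$ is again $\Y_0$-filtered, hence lies in $\Y$. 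But $Y/F$ is a quotient of $Y/Y_0\in\X$, so $Y/F\in\X\cap\Y=0$ and $Y=F$ is finitely generated; Theorem~\ref{thm:torsionpairs} then gives (1).

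Next the ``radical'' conditions (4) and (5). For $(1)\Leftrightarrow(4)$: given finitely generated $Y_0\le Y\in\Y$, let $W\le Y$ be the preimage of $\t_\X(Y/Y_0)$, so $W\in\Y$ and $W/Y_0=\t_\X(Y/Y_0)\in\X$; if $Y_0\neq 0$ the Reiten--Ringel condition makes $W$, hence $\t_\X(Y/Y_0)$, finitely generated (the case $Y_0=0$ being trivial), and conversely if $Y/Y_0\in\X$ then $\t_\X(Y/Y_0)=Y/Y_0$, so (4) forces $Y/Y_0$, hence $Y$, to be finitely generated. For $(1)\Rightarrow(5)$: for $0\neq Y\in\Y$ choose a finitely generated $0\neq Z\le Y$ and let $W$ be the preimage of $\t_\X(Y/Z)$; by the Reiten--Ringel condition $W$ is finitely generated (or $W=Z$ if $\t_\X(Y/Z)=0$), it is nonzero, and $Y/W=(Y/Z)/\t_\X(Y/Z)\in\Y$, so $Y^+:=W$ works. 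For $(5)\Rightarrow(2)$ I would build, for an arbitrary $Y\in\Y$, a filtration by transfinite recursion: $Y_0=0$; at a successor, (5) applied to $Y/Y_\alpha$ yields a finitely generated nonzero submodule $Y_{\alpha+1}/Y_\alpha\le Y/Y_\alpha$ with $Y/Y_{\alpha+1}\in\Y$; at limits, $Y_\lambda=\bigcup_{\alpha<\lambda}Y_\alpha$, so $Y/Y_\lambda=\varinjlim(Y/Y_\alpha)\in\Y$. Strict monotonicity forces termination at $Y_\sigma=Y$, and each factor $Y_{\alpha+1}/Y_\alpha$, being a finitely generated submodule of $Y/Y_\alpha\in\Y$, lies in $\Y_0$; hence every module of $\Y$ is $\Y_0$-filtered and (2) holds a fortiori.

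It remains to handle (3), which I expect to be the delicate point. For $(1)\Rightarrow(3)$: a strictly increasing chain $Y_0<Y_1<\cdots$ in $\Y_0$ with $Y_{i+1}/Y_i\in\X_0$ has, since $\X$ is closed under extensions and quotients, $Y_{i+1}/Y_0\in\X_0$ for every $i$; thus $Y:=\bigcup_iY_i\in\Y$ and $Y/Y_0=\varinjlim(Y_{i+1}/Y_0)\in\X$, with $Y_0\neq 0$ (otherwise $Y_1\in\X_0\cap\Y_0=0$), so by the Reiten--Ringel condition $Y$ is finitely generated, contradicting the strict chain over the noetherian ring $R$. For $(3)\Rightarrow(1)$, suppose the Reiten--Ringel condition fails, say $Y\in\Y$ is not finitely generated, $0\neq Y_0\le Y$ is finitely generated, and $Y/Y_0\in\X$. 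Here the naive idea of enlarging $Y_0$ inside $Y$ fails, because a torsion class need not be closed under submodules, so the successive quotients need not be torsion; the fix is to write $Y$ as the directed union of its finitely generated submodules $W_\lambda\supseteq Y_0$ and to set $V_\lambda/Y_0:=\t_\X(W_\lambda/Y_0)$. Then each $V_\lambda$ is a finitely generated submodule of $Y$ lying in $\Y_0$, each $V_\lambda/Y_0$ lies in $\X_0$, the family $(V_\lambda)$ is directed and monotone, and $\bigcup_\lambda(V_\lambda/Y_0)=\t_\X(Y/Y_0)=Y/Y_0$ because $\t_\X$ commutes with the directed union, so $\bigcup_\lambda V_\lambda=Y$. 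Since $Y$ is not finitely generated one extracts an infinite strictly increasing subchain of the $V_\lambda$ starting at $Y_0$, whose successive quotients are quotients of modules $V_\lambda/Y_0\in\X$, hence lie in $\X_0$ --- contradicting (3). The main obstacle is thus precisely this passage from (3) back to the Reiten--Ringel condition, where monotonicity of $\t_\X$ and its compatibility with directed unions do the work; all the remaining implications reduce to the Hill Lemma, the closure properties of $\X$ and $\Y$, and Theorem~\ref{thm:torsionpairs}.
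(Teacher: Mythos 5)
Your proof is correct, and it reaches the five equivalences by a genuinely different wiring than the paper, even though the toolbox is largely the same. The paper proves one cycle $1\Rightarrow 2\Rightarrow 3\Rightarrow 4\Rightarrow 5\Rightarrow 1$ internally: $\Sigma$-pure injectivity gives $\Y={}^{\perp}(\Y_0^{\perp})$ and hence $\Y_0$-filtrations via \cite[Corollary~3.2.4]{GT} and Lemma~\ref{SubFil}; the Hill Lemma converts a filtration into the finiteness needed for $(3)$; $(3)\Rightarrow(4)$ is obtained by writing the torsion part of $Y/F$, which lies in $\X=\varinjlim\X_0$, as a directed union of images of modules in $\X_0$; and $5\Rightarrow 1$ closes the loop with the transfinite construction followed by Corollary~\ref{cor:2.5}. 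You instead treat the Reiten--Ringel condition --- identified with $(1)$ by Theorem~\ref{thm:torsionpairs} --- as a hub and prove each of $(2)$--$(5)$ equivalent to it. Several of your steps are literally the paper's (the $\Y_0$-filtration of every module of $\Y$ in $(1)\Rightarrow(2)$, the Hill-Lemma extraction of a finitely generated $F$ with $Y/F\in\Y$, the transfinite recursion from $(5)$). The one genuinely different technical device is in the link between $(3)$, $(4)$ and Reiten--Ringel: where the paper uses $\X=\varinjlim\X_0$ directly, you take torsion parts $\t_{\X}(W_\lambda/Y_0)$ of the finitely generated layers and invoke continuity of the torsion radical over directed unions. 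That continuity is correct here --- it follows from exactness of $\varinjlim$ together with $\Y$ being closed under direct limits ($C$ is pure injective) --- but it is not automatic for arbitrary torsion pairs, so it deserves the one-line justification you only gesture at. Net effect: your version has more implications to verify but each is short, and it makes the role of the Reiten--Ringel condition (and hence of Theorem~\ref{thm:torsionpairs}) explicit, whereas the paper's cycle is self-contained and does not rely on that theorem.
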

\begin{proof}
$1\Rightarrow 2$: it is clear, since any module in $\mathcal Y$ is $\mathcal Y_0$-filtered.

$2\Rightarrow 3$: assume that $(Y_i)_{i\in\N}$ is a infinite strictly ascending chains $Y_0<Y_1<\dots<Y_i<Y_{i+1}< \dots$
in $\Y_0$ such that
$Y_{i+1}/Y_{i}\in \X_0$  for all $i\in\N$. Then $Y=\cup Y_i=\ld Y_i\in\Y$ is not finitely generated and $Y/Y_0 = (\ld Y_i)/Y_0 \cong \ld (Y_i/Y_0) \in \X$. Therefore $Y$ is $\mathcal Y_0$-filtered and by Theorem~\ref{lemma:hill}, $Y_0$ is contained in a finitely generated submodule $\overline{Y_0}$ of $Y$ such that $Y/\overline{Y_0}$ is in $\mathcal Y$. Since $Y/\overline{Y_0}$ is a quotient of $Y/Y_0$, and hence it is also torsion, we  have $Y=\overline{Y_0}$, contradicting the fact that $I$ is not finitely generated.

$3\Rightarrow 4$: suppose that for a given $ Y$ in $\mathcal Y$, and a finitely generated submodule $F$ of $Y$, the quotient $Y/F$ has torsion part $\overline Y/F$ which is not finitely generated. Then $\overline Y/F$ is a direct limit of modules $X_i$, $i\in I$, in $\mathcal X_0$. Denoting by $Y_i/F$ the homomorphic images in $\overline Y/F$ of the modules $X_i$, by means of the $Y_i$'s one can construct an infinite strictly ascending chain in $\mathcal Y_0$, contradicting 3.

$4\Rightarrow 5$: given any finitely generated submodule $Y_0$ of $Y$, take as  $Y^+$ the submodule of $Y$ containing $Y_0$ such that $Y^+/Y_0$ is the torsion part of $Y/Y_0$.

$5\Rightarrow 1$: we will prove that any module in $\mathcal Y$  is $\mathcal Y_0$-filtered. By \cite[Corollary~3.2.4]{GT} and Lemma~\ref{SubFil}, we will get $\mathcal Y={}^\perp(\mathcal Y_0^\perp)$, so that $C$ is $\Sigma$-pure injective by Corollary~\ref{cor:2.5}.

Let $Y$ in $\mathcal Y$ and $\mu = 2^{|Y|}$.  We construct, by transfinite induction, a continuous chain $(Y_\lambda \mid \lambda\leq\mu)$
of submodules of $Y$ such that, for every $\lambda<\mu$,

i) $Y/Y_{\lambda}\in\Y$,

ii) $Y_{\lambda+1}/Y_\lambda\in\Y_0$,

iii) if $Y_\lambda \lneqq Y$, then $Y_\lambda \lneqq Y_{\lambda+1}$.
\par\noindent
Set $Y_0=0$, and set $Y_{\lambda+1}=Y_\lambda$ in case $Y_\lambda=Y$,
or $Y_{\lambda+1}/Y_\lambda = (Y/Y_\lambda)^+$ if $Y_\lambda \lneqq Y$
(if this is the case, then $Y/Y_{\lambda+1} \cong (Y/Y_\lambda)/(Y/Y_\lambda)^+\in\Y$).
Then conditions i), ii) and iii) are clearly satisfied. Finally, if $\lambda$ is a limit ordinal, we set
$Y_\lambda = \bigcup_{k<\lambda}Y_k$, and condition iii) holds since
$Y/Y_\lambda = \ld_{k<\lambda} Y/Y_k\in\Y$.
This defines a $\Y_0$-filtration $(Y_\lambda \mid \lambda\leq\mu)$.
Finally, the choice $\mu = 2^{|Y|}$ and the property iii) guarantee that $Y_\mu = Y$.
\end{proof}

\begin{remark}
%If $R$ is an Artin algebra of finite representation type (e se mettessimo ring?????), then any module is a direct sum of finitely generated indecomposable ones. Since any finitely generated submodule is contained in a finitely generated direct summand, the Reiten-Ringel Condition~\ref{RRcond}  is clearly satisfied by any torsion pair. Then we deduce by Theorem~\ref{thm:torsionpairs} the well known fact that any 1-cotilting module in an Artin algebra of finite representation type is $\Sigma$-pure injective.
%Conversely, if $\Lambda$ is a  tame hereditary algebra, \cite{BK} gives a complete description of the cotilting $\Lambda$-modules;  the $\Sigma$-pure injective ones are exactly those with no adic direct summand. Thus by Theorem~\ref{thm:torsionpairs}, the cotilting torsion pairs which satisfy the Reiten-Ringel Condition~\ref{RRcond} are completely determined. These are those cogenerated by a preprojective or a preinjective cotilting module, and the one cogenerated by the direct sum of the generic module and all the Pr\"ufer modules.
If $\Lambda$ is a  tame hereditary $k$-algebra over an algebraically closed field $k$, \cite{BK} gives a complete description of the cotilting $\Lambda$-modules, up to equivalence.  The $\Sigma$-pure injective ones are exactly those with no adic direct summand.  Thus by Theorem~\ref{thm:torsionpairs}, the cotilting torsion pairs which satisfy the Reiten-Ringel Condition~\ref{RRcond} are completely determined.
%It can be verified that these are the only splitting cotilting torsion pairs in $\rMod \Lambda$, even if  any cotilting module induces a splitting torsion pair in $\rmod \Lambda$ (see Proposition~\ref{GT-equiv_enough}).  
\end{remark}

%\begin{proposition}
%Let $V$ be any tilting object in a locally noetherian Grothendieck category $\G$. Then the tilted torsion pair $(\X,\Y)$ in $\rMod \End_{\G}V$ is cogenerated by a $\Sigma$-pure injective cotilting module and the ring $\End_{\G}V$ is coherent.
%\end{proposition}

%\begin{proof}
%Let $0 \to Y_0 \to Y \to X \to 0$ be an exact sequence with $Y\in\Y$, $X\in\X$ and $Y_0$ finitely generated.
%By tilting counter equivalence, we derive the exact sequence
%$0 \to T'_{V}X \to T_{V}Y_0 \to T_{V}Y \to 0$. Since $Y_{0}$ is finitely generated over $ \End_{\G}V$,
%we see that $T_{V}Y_{0}$  is a factor of $V^n$, for some $n\in\N$. Following \cite{C},
%we have that $V$ is finitely presented (Corollary~4.3), so that $T_{V}Y_{0}$
%is finitely generated. Thus $T_{V}Y$ is finitely generated. Finally, $Y \cong H_{V}T_{V}Y$
%is finitely generated since the functor $H_{V}$ carries finitely generated objects of $\G$ to finitely generated $ \End_{\G}V$-modules (Lemma~6.1). Therefore the Reiten-Ringel condition $\bullet$ is satisfied and then by Theorem~\ref{thm:torsionpairs} the cotilting module that cogenerates $\mathcal Y$ is $\Sigma$-pure injective.
%\end{proof}


\begin{thebibliography}{99}
\bibitem{AST} Angeleri Huegel, L., $\check{\text{S}}$aroch, J and Trlifaj, J. \emph{On the telescope conjecture for module categories},  J. Pure Appl. Algebra  \textbf{212}  (2008), 297--310.
\bibitem{ATT} Angeleri Huegel, L., Tonolo, A. and Trlifaj, J. \emph{Tilting preenvelopes and cotilting precovers}, Algebr. Represent. Theory  \textbf{4}  (2001), 155--170.
\bibitem{B} Bazzoni, S. \emph{Cotilting modules are pure injective}, Proc. Amer. Math. Soc. \textbf{131} (2003), 3665--3672.
\bibitem{BK} Buan, A. and Krause, H. \emph{Cotilting modules over tame hereditary algebras}, Pacific J. Math. \textbf{211} (2003), 41--60.
\bibitem{C} Colpi, R. \emph{Tilting in Grothendieck categories}, Forum Math. \textbf{11} (1999), 735--759.
\bibitem{CF} Colpi, R., Fuller K. \emph{Tilting objects in abelian categories and quasitilted rings}, Trans. Amer. Math. Soc. \textbf{359} (2007), 741--765.
\bibitem{CGM} Colpi~R., Gregorio~E. and Mantese~F. \emph{On the heart of a faithful torsion theory}, J. Algebra \textbf{307} (2007), 841--863.
\bibitem{CG} Colpi~R. and Gregorio~E. \emph{On the heart of a cotilting torsion pair}, preprint.
\bibitem{GR} Garcia Rozas, J.R. \emph{Covers and Envelopes in the Category of Complexes of Modules}, Chapman \& Hall/CRC, London (1999).
\bibitem{GT} G\"obel~R. and Trlifaj~J. \emph{Approximations and Endomorphism Algebras of Modules}, De Gruyter Expositions in Math. \textbf{41}, Berlin - New York 2006.
\bibitem{HRS} Happel~D., Reiten~I., Smal$\o$~S.O. \emph{Tilting in Abelian Categories and Quasitilted Algebras}, Memoirs of the A.M.S., vol. \textbf{575}, 1996.
\bibitem{JL} Jensen, C.U. and Lenzing, H. \emph{Model Theoretic Algebra}, Algebra, Logic and Applications \textbf{2}, Gordon \& Breach, Amsterdam 1989.
\bibitem{RR} Reiten, I. and Ringel, C.M. \emph{Infinite dimensional representations of canonical algebras}, Canad. J. Math.  \textbf{58} (2006), 180--224.
\bibitem{St} Stenstr\"om, B. \emph{Rings of Quotients}, Grund. math. Wiss. \textbf{217}, Springer, New York (1975).
\bibitem{S} $\check{\text{S}}$t'ov\'\i$\check{\text{c}}$ek, J. \emph{All $n$-cotilting modules are pure injective}, Proc. Amer. Math. Soc. \textbf{134} (2006), 1891 --1897.
\end{thebibliography}
\end{document}